\documentclass[11pt, letterpaper]{article}

\usepackage{geometry}
\usepackage{color}
\usepackage{verbatim}
\geometry{letterpaper}

\usepackage[utf8x]{inputenc}
\usepackage{t1enc}
\usepackage[english]{babel}

\usepackage{amsmath,amssymb,amsthm}
\usepackage{mathrsfs,esint}
\usepackage{graphicx,wrapfig}
\usepackage[format=hang]{caption}

\newcommand*{\R}{{\mathbb R}}

\newcommand*{\N}{{\mathbb N}}
\newcommand*{\Z}{{\mathbb Z}}

\newcommand*{\Capp}{\text{Cap}}

\newcommand*{\eps}{\varepsilon}

 %{{\mathbb{\subset}}}
\newcommand*{\Om}{\Omega}

\newcommand*{\pip}{\varphi}

\providecommand*{\vint}[1]{\mathchoice
          {\mathop{\vrule width 5pt height 3 pt depth -2.5pt
                  \kern -9pt \kern 1pt\intop}\nolimits_{\kern -5pt{#1}}}
          {\mathop{\vrule width 5pt height 3 pt depth -2.6pt
                  \kern -6pt \intop}\nolimits_{\kern -3pt{#1}}}
          {\mathop{\vrule width 5pt height 3 pt depth -2.6pt
                  \kern -6pt \intop}\nolimits_{\kern -3pt{#1}}}
          {\mathop{\vrule width 5pt height 3 pt depth -2.6pt
                  \kern -6pt \intop}\nolimits_{\kern -3pt{#1}}}}
\newcommand*{\jint}{\fint}
\DeclareMathOperator{\Lip}{Lip}

\DeclareMathOperator{\dist}{dist}

\DeclareMathOperator{\rad}{rad}

\numberwithin{equation}{section}
\theoremstyle{plain}
\newtheorem{thm}[equation]{Theorem}

\newtheorem{prop}[equation]{Proposition}

\newtheorem{lem}[equation]{Lemma}

\theoremstyle{definition}

\newtheorem{remark}[equation]{Remark}

\begin{document}

\title{Trace and extension theorems for homogeneous Sobolev and Besov spaces for unbounded uniform domains in metric measure spaces}
\author{Ryan Gibara, Nageswari Shanmugalingam}
\maketitle

\begin{center}
Dedicated to Professor O.~V.~Besov on the occasion of his 90th birthday.\\
%\emph{``Repetition is the mother of all learning"}
\end{center}

\begin{abstract} {In this paper we fix $1\le p<\infty$ and consider $(\Om,d,\mu)$ be an unbounded, locally compact, non-complete 
metric measure space equipped with a doubling measure $\mu$ supporting a $p$-Poincar\'e inequality 
such that $\Om$ is a uniform domain in its completion $\overline\Om$. We 
realize the trace of functions in the Dirichlet-Sobolev space $D^{1,p}(\Om)$ on the 
boundary $\partial\Om$ as functions in the homogeneous Besov space $HB^\alpha_{p,p}(\partial\Om)$ for 
suitable $\alpha$; here, $\partial\Om$ is equipped with a non-atomic Borel regular measure $\nu$. We 
show that if $\nu$ satisfies a $\theta$-codimensional condition with respect to $\mu$ for some $0<\theta<p$, 
then there is a bounded linear trace operator $T:D^{1,p}(\Om)\rightarrow HB^{1-\theta/p}(\partial\Om)$ and
a bounded linear extension operator $E:HB^{1-\theta/p}(\partial\Om)\rightarrow D^{1,p}(\Om)$ 
that is a right-inverse of $T$.}
\end{abstract}

\noindent
    {\small \emph{Key words and phrases}: {Besov spaces, traces, Newton-Sobolev spaces, unbounded
uniform domain, doubling measure, Poincar\'e inequality}

\medskip

\noindent
    {\small Mathematics Subject Classification (2020): {Primary: 46E36; Secondary: 30H25, 46E35}
}

\section{Introduction}

In investigating the extension to which Dirichlet problems on a Euclidean domain 
can be posed in the study of partial differential equations, O.~V.~Besov~\cite{Besov, Besov2} formulated the notion of
Besov spaces, thus extending the work of Nikolski\u{\i}~\cite{N}. It was seen in~\cite{Besov, Besov2, BIKLN,BIN} and the
series of papers~\cite{Besov3, Besov4, Besov5} that for certain bounded Euclidean Lipschitz domains $\Om\subset\R^n$,
traces of Sobolev functions $W^{1,p}(\Om)$ belong to the Besov space $B^{1-1/p}_{p,p}(\partial\Om)$, 
where $\Om$ denotes the boundary of $\Om$, see also~\cite{Gag}. In his papers, Besov refers to the Besov 
spaces as \emph{Lipschitz-type spaces}.
The subsequent work of Jonsson and Wallin~\cite{JW80, JW84} extended this identification
of certain Besov spaces as trace class of Sobolev spaces for more irregular Euclidean domains, namely
domains whose boundaries are Ahlfors regular sets. 
In~\cite[Chapter~10]{Maz}, Maz'ya gives a detailed account of Besov capacities of Euclidean sets, using both the
homogeneous and inhomogeneous versions of Besov spaces.
Thus, every Besov function on the boundary of such a domain is permissible as a boundary condition
in the study of elliptic Dirichlet problems.

The exploration of traces of Sobolev function has been extended to the setting of metric measure 
spaces where the measure is doubling and supports a suitable Poincar\'e 
inequality. There, uniform domains $\Om$ whose boundary $\partial\Om$ satisfies a natural $\theta$-codimensional Hausdorff measure
condition for $0<\theta<p$ were shown in~\cite{Mal} to satisfy the condition that the traces of functions in the Newton-Sobolev space 
$N^{1,p}(\Om)$ are in the Besov class $B^{1-\theta/p}_{p,p}(\partial\Om)$. 
The preprint~\cite{Mal}, however, required the domain to be bounded. Subsequently, this result was extended for unbounded uniform
domains with \emph{bounded} boundaries in~\cite{GKS}.

In the event that the boundary of the uniform domain is unbounded, it is natural to ask what trace 
theorems hold true when $N^{1,p}(\Om)$ is replaced by its homogeneous analogue, the Dirichlet-Sobolev space 
$D^{1,p}(\Om)$, and $B^{1-\theta/p}_{p,p}(\partial\Om)$ is replaced by the homogeneous Besov space, 
$HB^{1-\theta/p}_{p,p}(\partial\Om)$. The primary goal of the present paper is to address this question, 
the answer to which is summarized in the following, the main theorem of the present note.

\begin{thm}\label{thm:main}
Let $(\Om,d,\mu)$ be an unbounded, locally compact, non-complete doubling metric measure space that supports a $p$-Poincar\'e inequality
for some $1\leq p<\infty$, and in addition $\Om$ be a uniform domain in its completion $\overline{\Om}$. Suppose also that 
$\partial\Om:=\overline{\Om}\setminus\Om$, the boundary of $\Om$, is unbounded and
supports a non-atomic Borel regular measure $\nu$ satisfying the following $\theta$-codimensional condition for some $0<\theta<p$\,:  
there exists a constant $C\ge 1$
such that for each $\zeta\in\partial\Om$ and
$r>0$, we have 
\[
\frac{1}{C}\, \nu(B(\zeta,r)\cap\partial\Om)\le \frac{\mu(B(\zeta,r)\cap\Om)}{r^\theta}\le C\, \nu(B(\zeta,r)\cap\partial\Om).
\]
Then there is a bounded linear trace operator $T: D^{1,p}(\Om)\to HB^{1-\theta/p}_{p,p}(\partial\Om)$ such that we have
\[
\lim_{r\to 0^+}\jint_{B(\zeta,r)\cap\Om}\!|u-Tu(\zeta)|\, d\mu=0
\]
for $\nu$-a.e.~$\zeta\in\partial\Om$ whenever $u\in D^{1,p}(\Om)$. Moreover, there is a bounded linear extension 
operator $E:HB^{1-\theta/p}_{p,p}(\partial\Om)\to D^{1,p}(\Om)$
such that $T\circ E$ is the canonical identity operator on $HB^{1-\theta/p}_{p,p}(\partial\Om)$.
\end{thm}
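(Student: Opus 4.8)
The plan is to deduce the theorem from four ingredients, in the spirit of the bounded case~\cite{Mal} and the bounded‑boundary case~\cite{GKS}, but keeping every estimate scale‑invariant so that it survives the passage to the homogeneous spaces. First I would record the geometry. Since $\Om$ is a uniform domain in $\overline{\Om}$ and $\mu$ is doubling and supports a $p$-Poincar\'e inequality, $\Om$ itself supports a $p$-Poincar\'e inequality for balls $B(x,r)\cap\Om$ (including those centered on $\dOm$), has the measure‑density property $\mu(B(\zeta,r)\cap\Om)\simeq\mu(B(\zeta,r))$, and satisfies $\mu(B(\zeta,2r)\cap\Om)\lesssim\mu(B(\zeta,r)\cap\Om)$ for $\zeta\in\dOm$; the codimension hypothesis then makes $\nu$ doubling on $\dOm$ with $\nu(B(\zeta,r)\cap\dOm)\simeq\mu(B(\zeta,r)\cap\Om)\,r^{-\theta}$, and (since $r\mapsto\nu(B(\zeta,r)\cap\dOm)$ is nondecreasing) forces $\mu(B(\zeta,2r)\cap\Om)\gtrsim 2^{\theta}\mu(B(\zeta,r)\cap\Om)$. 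I would fix a Whitney‑type cover $\{B_j=B(x_j,r_j)\}_j$ of $\Om$ with $r_j\simeq\dist(x_j,\dOm)$, with $\{2B_j\}_j$ of bounded overlap, with $2B_i\cap2B_j\neq\emptyset$ forcing $r_i\simeq r_j$, and with each $B_j$ assigned a nearest boundary point $\zeta_j\in\dOm$ and a shadow $\hat B_j:=B(\zeta_j,Cr_j)\cap\dOm$ (so $\mu(B_j)\simeq r_j^{\theta}\,\nu(\hat B_j)$, and the shadows of the Whitney balls of any one scale have bounded overlap); uniformity supplies the usual chain condition joining any two Whitney balls, and any point of $\Om$ to a corkscrew point near its nearest boundary point, along chains whose radii vary slowly and whose combinatorial length grows only logarithmically in the ratio of the endpoint radii. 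Finally I would fix a Lipschitz partition of unity $\{\pip_j\}_j$ subordinate to $\{2B_j\}_j$ with $0\le\pip_j\le1$, $\sum_j\pip_j\equiv1$ on $\Om$ and $\Lip\pip_j\lesssim r_j^{-1}$.

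For the trace, given $u\in D^{1,p}(\Om)$ with minimal $p$-weak upper gradient $g_u\in L^p(\Om)$, I would set $Tu(\zeta):=\lim_{r\to0^+}\jint_{B(\zeta,r)\cap\Om}u\,d\mu$ wherever the limit exists. Telescoping along $r_k=2^{-k}\rho$ and invoking the $p$-Poincar\'e inequality on $\Om$ gives
\[
\bigl|u_{B(\zeta,r_k)\cap\Om}-u_{B(\zeta,r_{k+1})\cap\Om}\bigr|\lesssim r_k\Bigl(\jint_{B(\zeta,\Lambda r_k)\cap\Om}g_u^p\,d\mu\Bigr)^{1/p};
\]
rewriting each average through the codimension identity and carrying out a Fubini computation over $\dOm$ bounds the $L^p(\nu)$-norm of the partial sums $\sum_{k=0}^{N}\bigl|u_{B(\zeta,r_k)\cap\Om}-u_{B(\zeta,r_{k+1})\cap\Om}\bigr|$ by $C\rho^{\,1-\theta/p}\|g_u\|_{L^p(\Om)}$ uniformly in $N$ --- the scale‑sum being a convergent geometric series precisely because $\theta<p$. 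Hence the series converges $\nu$-a.e., $Tu(\zeta)$ exists $\nu$-a.e., and letting $\rho\to0^+$ yields the asserted convergence $\jint_{B(\zeta,r)\cap\Om}|u-Tu(\zeta)|\,d\mu\to0$. It remains to prove $\|Tu\|_{HB^{1-\theta/p}_{p,p}(\dOm)}\lesssim\|g_u\|_{L^p(\Om)}$, which I would approach by writing, for $\zeta\neq\xi$ in $\dOm$, $Tu(\zeta)-Tu(\xi)$ as a telescoping sum of average‑differences along a chain of Whitney balls running from the boundary near $\zeta$, across the interior at scale $d(\zeta,\xi)$, and back down to the boundary near $\xi$, estimating each link by Poincar\'e, raising to the $p$-th power, inserting into
\[
\|Tu\|_{HB^{1-\theta/p}_{p,p}(\dOm)}^p=\int_{\dOm}\int_{\dOm}\frac{|Tu(\zeta)-Tu(\xi)|^p}{d(\zeta,\xi)^{p-\theta}\,\nu\bigl(B(\zeta,d(\zeta,\xi))\cap\dOm\bigr)}\,d\nu(\xi)\,d\nu(\zeta),
\]
and converting the $\nu$-integrals to $\mu$-integrals over the associated Whitney balls via the codimension identity; the target is a discrete sum of interior Dirichlet energies in which $\theta<p$ keeps each Whitney ball charged only boundedly often.

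For the extension, given $f\in HB^{1-\theta/p}_{p,p}(\dOm)$ I would set $Ef:=\sum_j\pip_j\,f_{\hat B_j}$ with $f_{\hat B_j}:=\jint_{\hat B_j}f\,d\nu$; this is a locally finite sum of locally Lipschitz functions, hence locally Lipschitz on $\Om$, and on a Whitney ball $B_i$ only the $\pip_j$ with $2B_j\cap2B_i\neq\emptyset$ contribute, so there $g_{Ef}\lesssim r_i^{-1}\max\{|f_{\hat B_j}-f_{\hat B_i}|:2B_j\cap2B_i\neq\emptyset\}$ and hence
\[
\int_\Om g_{Ef}^p\,d\mu\lesssim\sum_i\frac{\mu(B_i)}{r_i^{p}}\sum_{j:\,2B_j\cap2B_i\neq\emptyset}|f_{\hat B_j}-f_{\hat B_i}|^p\lesssim\int_{\dOm}\int_{\dOm}|f(\eta)-f(\eta')|^p\,W(\eta,\eta')\,d\nu(\eta')\,d\nu(\eta),
\]
where, after expanding $|f_{\hat B_j}-f_{\hat B_i}|^p\lesssim\jint_{\hat B_i}\jint_{\hat B_j}|f(\eta)-f(\eta')|^p\,d\nu(\eta')\,d\nu(\eta)$ and using $\mu(B_i)\simeq r_i^{\theta}\nu(\hat B_i)\simeq r_i^{\theta}\nu(\hat B_j)$, the weight $W(\eta,\eta')$ is a sum of terms $r_i^{-(p-\theta)}\nu(\hat B_i)^{-1}$ over the boundedly‑many Whitney balls of each scale $r_i\gtrsim d(\eta,\eta')$ whose shadows straddle $\eta$ and $\eta'$; because $\theta<p$ the factor $r_i^{-(p-\theta)}$ decays geometrically in the scale, so $W$ collapses to its largest term, $W(\eta,\eta')\lesssim d(\eta,\eta')^{-(p-\theta)}\,\nu(B(\eta,d(\eta,\eta'))\cap\dOm)^{-1}$, which is exactly the kernel of $\|\cdot\|_{HB^{1-\theta/p}_{p,p}(\dOm)}$; thus $\int_\Om g_{Ef}^p\,d\mu\lesssim\|f\|_{HB^{1-\theta/p}_{p,p}(\dOm)}^p$ and $E$ is linear. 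That $T\circ E$ is the canonical identity follows because $\nu$ is doubling and $f\in L^1_{\loc}(\dOm,\nu)$ (part of, or a consequence of, membership in $HB^{1-\theta/p}_{p,p}(\dOm)$ via a boundary Poincar\'e inequality), so $\nu$-a.e.\ $\zeta$ is a $\nu$-Lebesgue point of $f$; at such $\zeta$, $\jint_{B(\zeta,r)\cap\Om}Ef\,d\mu$ is a convex combination of the numbers $f_{\hat B_j}$ over Whitney balls with $2B_j\cap B(\zeta,r)\cap\Om\neq\emptyset$ (which forces $r_j\lesssim r$ and $\hat B_j\subset B(\zeta,Cr)\cap\dOm$), and comparing each such $f_{\hat B_j}$, along a chain of shadows, to $f_{\hat B_{j_0}}$ for one ``large'' Whitney ball near $\zeta$ at scale $r$ --- for which $\nu(\hat B_{j_0})\simeq\nu(B(\zeta,Cr)\cap\dOm)$ --- shows $\jint_{B(\zeta,r)\cap\Om}Ef\,d\mu\to f(\zeta)$, i.e.\ $T(Ef)=f$ $\nu$-a.e.

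I expect the main obstacle to be the trace seminorm bound $\|Tu\|_{HB^{1-\theta/p}_{p,p}(\dOm)}\lesssim\|g_u\|_{L^p(\Om)}$. The naive chaining estimate, which bounds the horizontal link of the chain joining $\zeta$ to $\xi$ crudely by $d(\zeta,\xi)\bigl(\jint_{B(\zeta,Cd(\zeta,\xi))\cap\Om}g_u^p\,d\mu\bigr)^{1/p}$, over‑counts in the unbounded‑boundary case: unlike on the extension side, the scale‑weight it produces on a given interior Whitney ball carries no geometric decay, so a more careful organization of the chains and a genuine exploitation of cancellation among the first differences on $\dOm$ are needed in order to charge each Whitney ball only boundedly often. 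This is precisely where the homogeneous formulation is indispensable --- the additive constants generated by the telescoping must cancel, which the difference‑based seminorm permits but an $L^p$-term would not --- and where $0<\theta<p$ enters decisively, both through the geometric convergence of the scale‑sums and through the lower mass bound $\mu(B(\zeta,2r)\cap\Om)\gtrsim2^\theta\mu(B(\zeta,r)\cap\Om)$ forced by the hypotheses. A minor recurring point is to check throughout that the constructed objects land in the homogeneous spaces proper (e.g.\ $Ef\in D^{1,p}(\Om)$ and not merely $D^{1,p}_{\loc}(\Om)$) and that every construction descends to the quotients by constants on which $D^{1,p}$ and $HB^{1-\theta/p}_{p,p}$ are defined.
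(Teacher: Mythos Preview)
Your plan matches the paper's architecture: cones/chains along uniform curves plus Poincar\'e for the trace, Whitney cover plus partition of unity for the extension, with the codimension identity translating between $\mu$ and $\nu$ throughout. The extension argument you sketch is essentially identical to the paper's Proposition~\ref{LipF} and Proposition~\ref{thm:L^p-trace}. One small difference: for the \emph{existence} of $Tu$ you telescope directly and sum in $L^p(\nu)$, whereas the paper argues abstractly that $D^{1,p}(\Om)=D^{1,p}(\overline{\Om})$ and then invokes the fact that sets of $p$-capacity zero in $\overline{\Om}$ are $\nu$-null (via $\mathcal{H}^{-\theta}\vert_{\dOm}\approx\nu$). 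Both are valid; yours is more self-contained.

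For the step you flag as the main obstacle --- the bound $\|Tu\|_{HB^{1-\theta/p}_{p,p}}\lesssim\|g_u\|_{L^p}$ --- the paper's concrete device is worth knowing. After telescoping $|Tu(\zeta)-Tu(\xi)|\le\sum_k r_k\bigl(\jint_{4\lambda B_k}g_u^p\bigr)^{1/p}$ along the chain of balls on a uniform curve from $\zeta$ to $\xi$, one fixes an auxiliary $\eps>0$ with $\theta+\eps<p$, writes $r_k=r_k^{1-(\theta+\eps)/p}\cdot r_k^{(\theta+\eps)/p}$, and applies H\"older in $k$. The factor $\bigl(\sum_k r_k^{(p-\theta-\eps)/(p-1)}\bigr)^{1-1/p}$ sums geometrically to $\approx d(\zeta,\xi)^{1-(\theta+\eps)/p}$, leaving a residual $d(\zeta,\xi)^{-\eps}$ that, after inserting into the Besov double integral and Tonelli, is absorbed by integrating the outer variable over $\dOm\setminus B(\zeta,d_\Om(x)/A)$ in dyadic annuli; the bounded overlap of the chain balls (your ``charge each Whitney ball only boundedly often'') is what converts the remaining sum into $\int_\Om g_u^p\,d\mu$. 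The $\eps$ is exactly the ``more careful organization'' you anticipate, and $\theta+\eps<p$ is where $0<\theta<p$ enters.
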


The proof of the above theorem, adapting the technique of~\cite{Mal}, will be in two parts; the trace part is proved in 
Theorem~\ref{thm:trace},
and the extension part is proved in Theorem~\ref{thm:ext}. The reader might also be interested in~\cite{HM} for a discussion of
trace classes of Haj\l asz-Sobolev functions on Euclidean domains satisfying a John-type condition.

We do not know whether choice of homogeneous versions of Besov and Sobolev spaces in the above theorem 
can be replaced with their inhomogeneous counterparts.
In~\cite{Mal}, where $\Om$ is bounded,  
the homogeneous spaces in the above theorem coincide with the corresponding inhomogeneous spaces; 
in this case, there is even control of the 
$L^p$-norms of the respective functions. When $\Om$ is an unbounded uniform domain but with
$\partial\Om$ bounded, then the identity of certain {\emph{inhomogeneous}}
Besov classes of functions on $\partial\Om$ as the trace of 
Dirichlet-Sobolev classes of functions on $\Om$ follows from~\cite{GKS}. Thus, the novelty in the present work is the ability 
to handle the possibility that $\partial\Om$ is unbounded. 

Note that when $p=1$, the theorem forces $0<\theta<1$. This is necessary as, when $\theta=1$, the trace class of $N^{1,1}(\Om)$
is known to be $L^1(\partial\Om)$. Indeed, in  the case that $\theta=1$, there is a \emph{linear} extension operator from  
$B^{0}_{1,1}(\partial\Om)$ to $N^{1,1}(\Om)$, but the trace operator from $N^{1,1}(\Om)$ is onto $L^1(\partial\Om)$,
with the extension from $L^1(\partial\Om)$ being necessarily non-linear, see~\cite{Gag, Pe} for the Euclidean setting, and~\cite{MSS} for the 
setting of metric measure spaces. In the case that $0<\theta<1$, the
extension operator we obtain \emph{is} bounded and linear. For the case of Euclidean domains, there is a nice discussion of alternate
definitions of trace given in~\cite{BuMa1, BuMa2}, and an accessible version of this can also be found in~\cite[Chapter~9.5]{Maz}.

Slight modifications throughout the paper show that the theorem still holds if we regard $\Omega$ as a domain living inside a larger 
complete metric measure space $X$ (as opposed to $\overline{\Om}$). Since the problem of traces in that setting can be reduced to the case that
the ambient space is merely $\overline{\Om}$, we leave this detail to the interested reader.

The link between Newton-Sobolev or Dirichlet-Sobolev spaces and the homogeneous or inhomogeneous 
Besov spaces give us a handy way of
analyzing the behavior of potentials related to Besov energy, a non-local energy, by utilizing the now 
well-known behavior of potentials related to
Dirichlet-Sobolev energy, see for example~\cite{CKKSS, LS}. Conversely, the identification of Besov 
spaces as traces of Sobolev-type spaces
also gives us the limit on the type of boundary data that give rise to finite-energy solutions, on the 
domain, of certain Dirichlet boundary value problems, see for
instance~\cite{BBS}. We hope the results given in this paper help further this endeavor of connecting 
non-local energies to local energies.

\vskip .3cm

\noindent {\bf Acknowledgement:} The authors thank Riikka Korte and Mathav Murugan
for valuable discussions on matters related to this paper.
 The second author's work is partially supported by the NSF (U.S.A.) grant DMS~\#2054960.

\section{Preliminaries}\label{Sec:2}

In this section, we develop the background material needed for the remainder of the paper. In what follows, 
$(Z,d_Z,\mu_Z)$ is an arbitrary metric measure space unless otherwise stated.

\subsection{Sobolev spaces}
	
	In a metric measure space with no {\it a priori} smooth structure, let alone linear structure, there is no one natural candidate for the 
notion of derivative. One possibility, which generalizes the fundamental theorem of calculus and exploits the geometry of curves in a 
metric measure space, is the notion of upper gradients, first proposed by
Heinonen and Koskela~\cite{HK}. 

We say that a non-negative Borel function $g$ on $Z$ is an \emph{upper gradient} of a measurable function $u$ on $Z$ if 
\begin{equation}\label{ug}
	|u(y)-u(x)|\le \int_\gamma\! g\, ds
\end{equation}
holds for all rectifiable cures in $Z$ joining $x$ to $y$. The right-hand side is meant to be interpreted as infinity if at 
least one of $u(x)$ or $u(y)$ is infinite. Every function trivially has $g\equiv\infty$ as an upper gradient, and for each upper 
gradient $g$ the function $g+\tilde{g}$ is also an upper gradient for every non-negative Borel function $\tilde{g}$. For $1\le p<\infty$, 
we say that $g$ is a {\emph{$p$-weak upper gradient}} of $u$ if the collection $\Gamma$ of rectifiable curves for 
which inequality~\eqref{ug} fails has $p$-modulus zero. Here, by a family $\Gamma$ of curves having $p$-modulus zero we
mean that there is a non-negative Borel function $\rho\in L^p(Z)$ such that $\int_\gamma\rho\, ds=\infty$ for each $\gamma\in\Gamma$.
	
	Of special importance in the context of metric measure spaces are the Lipschitz functions, which, in some sense, 
play a similar role to that of the smooth functions in real analysis. If $u$ is $L$--Lipschitz, then it is immediate that $g\equiv L$ is an 
upper gradient for $u$. For a merely locally Lipschitz function $u$, then its \emph{local Lipschitz constant function}
\[
\Lip u(x)=\limsup_{y\rightarrow x}\frac{|u(y)-u(x)|}{d_Z(y,x)}
\]
is an upper gradient for $u$.  
	
	For a fixed measurable function $u$ on $Z$, consider the collection $D_p(u)$ of all $p$-weak upper gradients of $u$. The set 
$D_p(u)\cap L^p(Z)$ is a closed convex subset of $L^p(Z)$ and so, if it is non-empty, has a unique element of smallest $L^p$-norm. 
We denote this element by $g_u$ and call it the {\emph{minimal $p$-weak upper gradient}} of $u$. 
	
	We say that a measurable function $u$ on $Z$ is in the {\emph{Dirichlet-Sobolev space}} $D^{1,p}(Z)$ for $1\leq p<\infty$ if the following 
semi-norm is finite: $\|u\|_{D^{1,p}}:=\|g_u\|_{L^p}$. If, in addition, $u$ satisfies $\int_Z|u|^p\, d\mu_Z<\infty$, then $u$ is said to be 
in the {\emph{Newton-Sobolev space}} $N^{1,p}(Z)$ with 
semi-norm $\|u\|_{N^{1,p}}:=\|u\|_{L^p}+\|u\|_{D^{1,p}}$.  

In the context of Euclidean domains, $N^{1,p}(Z)$ corresponds to the classical Sobolev spaces $W^{1,p}(Z)$. We invite
the interested reader to consult~\cite{HKST} for details and proofs regarding the statements made in this subsection.

\subsection{Sobolev $p$-capacities}\label{sub:capacities}

Let $1\leq p <\infty$. Given a set $E\subset Z$, we set its {\emph{Sobolev $p$-capacity}} to be the number
\[
\Capp_p^Z(E):=\inf_{u\in\mathcal{A}(E)} \Vert u\Vert_{N^{1,p}}\,
\]
where $\mathcal{A}(E)$ is the collection of all functions $u\in N^{1,p}(Z)$ such that $u\ge 1$ on $E$.

A function in $L^p(Z)$ is well-defined only up to sets of $\mu_Z$-measure zero. Newton-Sobolev functions are more constrained, for they are
well-defined up to sets of Sobolev $p$-capacity zero in the sense that if $u\in N^{1,p}(Z)$, then $\Vert u\Vert_{N^{1,p}}=0$ if and only if
the $p$-capacity of the set $\{z\in Z\, :\, u(z)\ne 0\}$ is zero, see for instance~\cite[Corollary~7.2.10]{HKST}.

\subsection{Doubling property of measure}

The metric measure space $(Z,d_Z,\mu_Z)$ is said to be {\emph{doubling}} if there is a constant $C\geq 1$ 
such that for all $z\in Z$ and $r>0$ we have
\[
\mu_Z(B(z,2r))\le C\, \mu_Z(B(z,r))\,.
\]
Given a ball $B\subset Z$, there may be more than one choice of center $z$ and radius $r$; we will assume that a generic ball $B$
is identified together with its center and radius. For a ball $B=B(z,r)$, we will denote by $\tau B$ the  ball $B(z,\tau r)$ when $\tau>0$.

The doubling property of $\mu_Z$ implies that $(Z,d_Z)$ is a {\emph{doubling metric space}}. That is, there exists a positive integer $N$, 
depending only on the doubling constant of $\mu_Z$, such that for each $r>0$ and $x\in Z$, every $r/2$--separated set 
$A\subset B(x,r)$ has at most $N$ elements. A set being $r/2$--separated means that for each $y,z\in A$ with $y\ne z$, 
we have $d_Z(y,z)\ge r/2$.

\subsection{Poincar\'e inequalities}

Let $1\le p<\infty$. The metric measure space $(Z,d_Z,\mu_Z)$ is said to support a {\emph{$p$-Poincar\'e inequality}} if 
there are constants $C>0$ and $\lambda\ge 1$
such that for all balls $B\subset Z$ and upper gradients $g$ of function $u\in D^{1,p}(Z)$,
\[
\jint_B\!|u-u_B|\, d\mu_Z\le C\, \rad(B)\, \left(\jint_{\lambda B}\!g^p\, d\mu_Z\right)^{1/p}.
\]
Support of a Poincar\'e inequality implies some strong geometric connectivity properties of $Z$; see~\cite{HKST} and the 
references therein for
more on this topic. The validity of $p$-Poincar\'e inequality automatically implies that functions in $D^{1,p}(Z)$ 
are necessarily in $L^1_{loc}(Z)$.

When $(Z,d_Z,\mu_Z)$ is doubling and supports a $p$-Poincar\'e inequality, a stronger version of the Lebesgue 
differentiation theorem is known for 
Newton-Sobolev functions. Recall that if $\mu_Z$ is doubling, then $\mu_Z$-almost every point in $Z$ is a 
Lebesgue point of a function $u\in L^p(Z)$.
From~\cite[Theorem~9.2.8]{HKST} for the case $p>1$ and from~\cite{KKST} for the case $p=1$ 
(see also~\cite{KV} for a related Sobolev function-space called the Haj\l asz-Sobolev space), we have the
following result. Note that when $u\in D^{1,p}(Z)$, for each ball $B\subset Z$ we have that $u\,\eta_B$ 
is in the Newton-Sobolev class $N^{1,p}(Z)$ where
$\eta_B$ is a Lipschitz function on $Z$ with support in $2B$ such that $\eta_B=1$ on $B$ and $0\le \eta_B\le 1$.

\begin{prop}\label{prop:Lebesgue-pt}
If $(Z,d_Z,\mu_Z)$ is complete,
doubling, and supports a $p$-Poincar\'e inequality, then for each $u\in D^{1,p}(Z)$ the set of non-Lebesgue points of $u$ is of
Sobolev $p$-capacity zero.
\end{prop}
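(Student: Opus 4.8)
The plan is to reduce Proposition~\ref{prop:Lebesgue-pt} to the standard capacitary differentiation result for Newton--Sobolev functions by a localization argument. The key observation, already noted in the excerpt, is that if $u\in D^{1,p}(Z)$ and $\eta_B$ is a Lipschitz cutoff supported in $2B$ with $\eta_B\equiv 1$ on $B$ and $0\le\eta_B\le 1$, then $u\,\eta_B\in N^{1,p}(Z)$; indeed $u\in L^1_{\loc}(Z)$ by the $p$-Poincar\'e inequality, and on any bounded region $u$ can be shown to lie in $L^p$ of that region (using completeness, doubling, the Poincar\'e inequality, and a Sobolev--Poincar\'e-type chaining estimate to upgrade $L^1_{\loc}$ to $L^p_{\loc}$), so $u\,\eta_B$ has finite $L^p$-norm and a $p$-weak upper gradient in $L^p(Z)$ given by the Leibniz rule $g_{u\eta_B}\le \eta_B\, g_u + |u|\,\Lip\eta_B$, supported in $2B$.

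First I would invoke the known Lebesgue-point theorem for Newton--Sobolev functions on a complete, doubling, $p$-Poincar\'e space: for $v\in N^{1,p}(Z)$, the set of points that fail to be Lebesgue points of $v$ has Sobolev $p$-capacity zero. For $p>1$ this is \cite[Theorem~9.2.8]{HKST}, and for $p=1$ it is the result of \cite{KKST}. Next, fix a ball $B=B(x_0,R)$ and apply this to $v=u\,\eta_{2B}\in N^{1,p}(Z)$: the non-Lebesgue points of $v$ inside $B$ form a set of $p$-capacity zero, and since $v=u$ on $2B\supset B$, the same is true for $u$ restricted to $B$. That is, $N_B:=\{z\in B : z \text{ is not a Lebesgue point of } u\}$ satisfies $\Capp_p^Z(N_B)=0$.

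Then I would exhaust $Z$ by a countable family of balls $B_j=B(x_0, j)$, $j\in\N$, so that $Z=\bigcup_j B_j$, and observe that the full non-Lebesgue set of $u$ is $N:=\bigcup_j N_{B_j}$. Since Sobolev $p$-capacity is countably subadditive --- $\Capp_p^Z(\bigcup_j E_j)\le \sum_j \Capp_p^Z(E_j)$, a standard fact for the Newtonian capacity (see \cite[Chapter~6]{HKST}) --- and each $N_{B_j}$ has capacity zero, we conclude $\Capp_p^Z(N)=0$, which is the assertion.

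The main obstacle is the preliminary claim that $u\in D^{1,p}(Z)$ forces $u\in L^p_{\loc}(Z)$, so that the cutoff products genuinely land in $N^{1,p}$; the $p$-Poincar\'e inequality only gives $L^1_{\loc}$ directly. This is handled by a telescoping/chaining argument: writing $|u - u_{B(x,r)}|$ as a sum of differences of averages over dyadically shrinking and comparable balls and using doubling together with the Poincar\'e inequality on each, one obtains a pointwise bound of $u$ (a.e.) by $|u_{B}| + $ a (fractional) maximal function of $g_u$ on an enlarged ball, and then Sobolev--Poincar\'e or a maximal-function estimate promotes integrability to $L^p$ (or even a higher exponent) on bounded sets. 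One must take modest care that the cutoff is Lipschitz (so $\Lip\eta_B$ is a genuine bounded upper gradient) and that the Leibniz rule for $p$-weak upper gradients applies to the product of a $D^{1,p}$ function with a bounded Lipschitz function of bounded support; both are routine in this framework, so I would state them and refer to \cite{HKST} rather than reprove them.
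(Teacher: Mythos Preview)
Your proposal is correct and matches the paper's treatment: the paper does not give an independent proof of this proposition but simply cites \cite[Theorem~9.2.8]{HKST} for $p>1$ and \cite{KKST} for $p=1$, having noted in the preceding sentence precisely the cutoff reduction $u\,\eta_B\in N^{1,p}(Z)$ that you spell out. Your localization-plus-countable-subadditivity argument is exactly the intended bridge from the cited $N^{1,p}$ results to the $D^{1,p}$ statement, and your care about $L^p_{\loc}$ (via Sobolev--Poincar\'e/chaining) fills in the one step the paper leaves implicit.
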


The homogeneous space $D^{1,p}(Z)$ is, in some instances, different from $N^{1,p}(Z)$. Note that 
$N^{1,p}(Z)+\R\subset D^{1,p}(Z)$ in the sense
that adding constants to functions in $N^{1,p}(Z)$ gives a function that is in $D^{1,p}(Z)$. However, $D^{1,p}(Z)$ could be larger than
$N^{1,p}(Z)+\R$, see for example~\cite[Theorem~1.4, Proposition~7.3, Example~7.1]{BBS1}. Currently, to the best our knowledge, no potential
non-trivial criteria are known that characterize when $D^{1,p}(Z)=N^{1,p}(Z)+\R$. A similar question for the homogeneous and inhomogeneous
Besov spaces $HB^\alpha_{p,p}(Z)$, $B^\alpha_{p,p}(Z)$ can be posed;
these spaces are defined in the next subsection below. The above-mentioned relationships 
between the homogeneous and inhomogeneous spaces
have implications to existence problems related to global energy minimizers and potential theory.

\subsection{Besov spaces}

The study of a specific 
sub-class of Besov spaces was first initiated, in the context of $Z$ being a smooth Euclidean space, by
Nikolski\u{\i}~\cite{N} in relation to ``fractional derivatives" of functions in a generalized Zygmund class. These were then extended to more 
general Besov classes $B^\alpha_{p,q}(Z)$ by O.~V.~Besov~\cite{Besov, Besov2}. Motivated by Dirichlet problems
for Lipschitz domains in Euclidean spaces, the papers~\cite{Besov2, Besov4} developed the theory of Besov spaces as traces, 
to the boundary of the domain, of 
certain Sobolev function classes on the domain. In~\cite{Besov2}, one can also find the identification of Besov spaces as interpolation spaces,
interpolated between $L^p$ and Sobolev spaces, in the context of Euclidean spaces, see also~\cite{Kar, Tri,Y} for some 
discussion on this aspect of the theory. From the point of view of
interpolation in the context of metric measure spaces,
Besov spaces were first studied in~\cite{GKS2}. The context of traces in the metric setting, under various limitations on the 
shape of the domain in the metric space,
can be found in~\cite{GKS,Mal, MSS} for instance. The aim of the present note is to extend this aspect of traces to the 
case where both the domain and its boundary are
unbounded.
	
	We say that a function $f\in L^p_{\text{loc}}(Z)$ is in the {\emph{homogeneous Besov space}} $HB^\alpha_{p,q}(Z)$ 
for $0\leq\alpha<\infty$, $1\leq p<\infty$, and $1\leq q\leq\infty$ if the following semi-norm is finite:
\[
\|f\|_{HB^\alpha_{p,q}}:=
\begin{cases} \left(\displaystyle \int_{0}^{\infty}\left( \displaystyle \int_{Z}\displaystyle \jint_{B(y,r)}\!\frac{|f(y)-f(x)|^p}{r^{\alpha p}}\,d\mu_Z(x)\,d\mu_Z(y) \right)^{\frac qp}\frac{dr}{r} \right)^{\frac 1q}\!,&q<\infty\\
	\sup\limits_{r>0}\left(\displaystyle \int_{Z}\displaystyle \jint_{B(y,r)}\!\frac{|f(y)-f(x)|^p}{r^{\alpha p}}\,d\mu_Z(x)\,d\mu_Z(y)\right)^{\frac 1p}\!,&q=\infty
\end{cases}. 
\]
	If, in addition, $f\in L^p(Z)$, then $f$ is said to be in the {\emph{inhomogeneous Besov space}} ${B}^\alpha_{p,q}(Z)$ 
with semi-norm $\|f\|_{{B}^\alpha_{p,q}}=\|f\|_{L^p}+\|f\|_{HB^\alpha_{p,q}}$. Note that
the case $q=\infty$ is related to the so-called Korevaar-Schoen spaces, see for instance~\cite{KMc} or~\cite[Section~4]{ABCRST}.
In the present paper, we focus on the classes $B^\alpha_{p,p}(Z)$ for suitable choice of $\alpha$, as these spaces
arise in the theory of traces of Sobolev functions on $Z$. Such Besov spaces enjoy the following characterization, the proof of 
which is included for the benefit of the interested reader, see also \cite{BP,GKS2}.

\begin{lem}
Assume that $\mu_Z$ is doubling and has no atoms. For $\alpha>0$, $1\leq p<\infty$, and $f\in L^p_{\text{loc}}(Z)$,
\begin{equation}\label{eq:B-P}
	\|f\|_{HB^\alpha_{p,p}}^p
	\approx \int_{Z}\int_{Z}\!\frac{|f(y)-f(x)|^p}{d_Z(y,x)^{\alpha p}\mu_Z(B(y,d_Z(y,x))) }\,d\mu_Z(x)\,d\mu_Z(y)\,,
\end{equation}
and, for each $C>0$, we have
\begin{equation}\label{rem:partition}
	\|f\|_{HB^\alpha_{p,p}}^p
	\approx 
	\sum_{l\in\Z}\frac{1}{2^{l\alpha p}}\int_{Z}\jint_{B(y,C2^l)}\!|f(y)-f(x)|^p\,d\mu_Z(x)\,d\mu_Z(y)\,. 
\end{equation}
\end{lem}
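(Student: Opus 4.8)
The plan is to establish the dyadic characterization~\eqref{rem:partition} first, directly from the definition of $\|\cdot\|_{HB^\alpha_{p,p}}$, and then to deduce the Gagliardo-type characterization~\eqref{eq:B-P} by comparing its right-hand side with the $C=1$ instance of the dyadic sum. Since $q=p$, the outer exponents in the definition are trivial, so $\|f\|_{HB^\alpha_{p,p}}^p=\int_0^\infty\int_Z\jint_{B(y,r)}\frac{|f(y)-f(x)|^p}{r^{\alpha p}}\,d\mu_Z(x)\,d\mu_Z(y)\,\frac{dr}{r}$, and it is this quantity (matching the $f$-homogeneity of the right-hand sides of~\eqref{eq:B-P} and~\eqref{rem:partition}) that I would compare. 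Throughout I would abbreviate $V_y(r):=\mu_Z(B(y,r))$ and $I_y(r):=\int_{B(y,r)}|f(y)-f(x)|^p\,d\mu_Z(x)$, both nondecreasing in $r$, and write $C_D$ for the doubling constant; measurability of the relevant maps in $y$ and $r$ is routine and I would not dwell on it.

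For~\eqref{rem:partition}, fix $C>0$ and $l\in\Z$. For $r\in[C2^l,C2^{l+1})$ one has $(C2^l)^{\alpha p}\le r^{\alpha p}\le 2^{\alpha p}(C2^l)^{\alpha p}$ and, by a \emph{single} application of doubling, $V_y(C2^l)\le V_y(r)\le V_y(C2^{l+1})\le C_D\,V_y(C2^l)$ uniformly in $y$; combined with the monotonicity $I_y(C2^l)\le I_y(r)\le I_y(C2^{l+1})$, this sandwiches the inner quantity $\jint_{B(y,r)}\frac{|f(y)-f(x)|^p}{r^{\alpha p}}\,d\mu_Z(x)=\frac{I_y(r)}{V_y(r)\,r^{\alpha p}}$ between fixed multiples of $(C2^l)^{-\alpha p}\jint_{B(y,C2^l)}|f(y)-f(x)|^p\,d\mu_Z(x)$ and $(C2^{l+1})^{-\alpha p}\jint_{B(y,C2^{l+1})}|f(y)-f(x)|^p\,d\mu_Z(x)$. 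Integrating in $y$ over $Z$, then in $r$ over $[C2^l,C2^{l+1})$ against $dr/r$ (of total mass $\ln 2$), summing over $l\in\Z$, and re-indexing $l\mapsto l+1$ in the upper estimate, I would then identify $\|f\|_{HB^\alpha_{p,p}}^p$ with the sum on the right of~\eqref{rem:partition}, up to constants depending only on $\alpha,p,C,C_D$. The point to monitor here is that only a bounded number of doublings ever enters, so that all comparison constants are uniform in the dyadic index $l$.

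For~\eqref{eq:B-P}, fix $y$ and decompose the $x$-integral over the dyadic annuli $A_m(y):=\{x\in Z:2^m\le d_Z(x,y)<2^{m+1}\}$, $m\in\Z$ (the point $x=y$ plays no role, the integrand vanishing there). On $A_m(y)$ one has $d_Z(x,y)^{\alpha p}\approx 2^{m\alpha p}$ and, again by one doubling, $\mu_Z(B(y,d_Z(x,y)))=V_y(d_Z(x,y))\approx V_y(2^m)$, so the right-hand side of~\eqref{eq:B-P} is comparable to $S:=\sum_{m\in\Z}2^{-m\alpha p}\int_Z V_y(2^m)^{-1}\int_{A_m(y)}|f(y)-f(x)|^p\,d\mu_Z(x)\,d\mu_Z(y)$. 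Since the $C=1$ case of~\eqref{rem:partition} is already in hand, it suffices to show $S\approx T$ with $T:=\sum_{l\in\Z}2^{-l\alpha p}\int_Z\jint_{B(y,2^l)}|f(y)-f(x)|^p\,d\mu_Z(x)\,d\mu_Z(y)$. Here $S\lesssim T$ is immediate from $\int_{A_m(y)}\le\int_{B(y,2^{m+1})}$ and $V_y(2^m)\ge C_D^{-1}V_y(2^{m+1})$ followed by re-indexing; for $T\lesssim S$ I would write $\jint_{B(y,2^l)}|f(y)-f(x)|^p\,d\mu_Z(x)=V_y(2^l)^{-1}\sum_{m\le l-1}\int_{A_m(y)}|f(y)-f(x)|^p\,d\mu_Z(x)$, interchange the $l$- and $m$-summations by Tonelli, and estimate $\sum_{l\ge m+1}2^{-l\alpha p}V_y(2^l)^{-1}\le V_y(2^m)^{-1}\sum_{l\ge m+1}2^{-l\alpha p}$, a convergent geometric series.

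The only genuinely delicate point — and the only place where the hypotheses $\alpha>0$ and doubling are \emph{both} indispensable — is this final geometric-series estimate: the series $\sum_{l\ge m+1}2^{-l\alpha p}$ converges precisely because $\alpha>0$ (which is exactly why~\eqref{eq:B-P} and~\eqref{rem:partition} are stated for $\alpha>0$), and the monotonicity $V_y(2^l)\ge V_y(2^m)$ for $l\ge m$, which lets one pull $V_y(2^m)^{-1}$ out of the tail, is what keeps the sum under control. Everything else is routine dyadic bookkeeping.
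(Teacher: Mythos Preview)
Your proof is correct and follows essentially the same route as the paper's: both establish~\eqref{rem:partition} by a dyadic partition of the $r$-axis and one application of doubling per scale, and both obtain~\eqref{eq:B-P} via the annular decomposition of $B(y,2^l)$, an interchange of the scale and annulus summations, and the geometric-series estimate $\sum_{l\ge m}2^{-l\alpha p}\mu_Z(B(y,2^l))^{-1}\lesssim 2^{-m\alpha p}\mu_Z(B(y,2^m))^{-1}$ enabled by $\alpha>0$. The only difference is organizational: you first reduce the right-hand side of~\eqref{eq:B-P} to an annular sum $S$ and then compare $S$ directly to the already-established dyadic sum $T$, whereas the paper returns to the integral in $r$, inserts the annular splitting there, and only afterwards identifies the result with the Gagliardo integrand; the computational core (Tonelli plus the geometric tail) is identical.
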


\begin{proof}
Let $f\in L^p_{\text{loc}}(Z)$. Fix $y\in Z$ and partition $(0,\infty)$ into intervals of the form $(C2^{l-1},C2^l)$ for some $C>0$ and $l\in\Z$. 
For $C2^{l-1}<r<C2^l$, we have that
\begin{equation}\label{eq:step1}
    \jint_{B(y,r)}\!\frac{|f(y)-f(x)|^p}{r^{\alpha p + 1}}\,d\mu_Z(x)\approx \frac{1}{2^{l(\alpha p +1)}}\jint_{B(y,C2^l)}\!|f(y)-f(x)|^p\,d\mu_Z(x)\,,
\end{equation}
and so, since $\mu_Z$ is non-atomic, \eqref{rem:partition} follows. 

Setting $A_i=B(y,C2^i)\setminus B(y,C2^{i-1})$, \eqref{eq:step1} also tells us, using the doubling property of $\mu_Z$, that
\[
\jint_{B(y,r)}\!\frac{|f(y)-f(x)|^p}{r^{\alpha p + 1}}\,d\mu_Z(x)\approx
   \frac{1}{2^{l(\alpha p +1)}\mu_Z(B(y,2^l))}\sum_{i=-\infty}^{l}\int_{A_i}\!|f(y)-f(x)|^p\,d\mu_Z(x)\,.
\]
Hence,
\begin{align*}
	\int_{0}^{\infty}\!\jint_{B(y,r)}\!\frac{|f(y)-f(x)|^p}{r^{\alpha p + 1}}\,d\mu_Z(x)
	&\approx \sum_{l\in\Z}\frac{1}{2^{l\alpha p}\mu_Z(B(y,2^l))}\sum_{i=-\infty}^{l}\int_{A_i}\!|f(y)-f(x)|^p\,d\mu_Z(x)\\
	& = \sum_{i\in\Z}\left(\sum_{l=i}^{\infty} \frac{1}{2^{l\alpha p}\mu_Z(B(y,2^l))}\right) \int_{A_i}\!|f(y)-f(x)|^p\,d\mu_Z(x). 
\end{align*}
Since 
\begin{align*}
\frac{1}{2^{l\alpha p}\mu_Z(B(y,2^i))}\leq\sum_{l=i}^{\infty} \frac{1}{2^{l\alpha p}\mu_Z(B(y,2^l))}
&\leq \frac{1}{2^{i\alpha p}\mu_Z(B(y,2^i))}\sum_{l=i}^{\infty}\frac{1}{2^{(l-i)\alpha p}}\\
&\lesssim \frac{1}{2^{i\alpha p}\mu_Z(B(y,2^i))}\,,
\end{align*}
we have that 
\begin{align*}
\int_{0}^{\infty}\!\jint_{B(y,r)}\!\frac{|f(y)-f(x)|^p}{r^{\alpha p + 1}}\,d\mu_Z(x)& \approx 
\sum_{i\in\Z}\frac{1}{2^{i\alpha p}\mu_Z(B(y,2^i))}\int_{A_i}\!|f(y)-f(x)|^p\,d\mu_Z(x)\\
&\approx \sum_{i\in\Z}\frac{1}{2^{i\alpha p}\mu_Z(B(y,2^i))}\int_{A_i}\!|f(y)-f(x)|^p\,d\mu_Z(x).
\end{align*}
For $x\in A_i$, we have that $d_Z(y,x)\approx 2^i$ and so $\mu_Z(B(y,d_Z(y,x)))\approx \mu_Z(B(y,2^i))$ by doubling and 
monotonicity of measure; therefore,
\begin{align*}
\int_{0}^{\infty}\!\jint_{B(y,r)}\!\frac{|f(y)-f(x)|^p}{r^{\alpha p + 1}}\,d\mu_Z(x)
&\approx \sum_{i\in\Z}\int_{A_i}\!\frac{|f(y)-f(x)|^p}{d_Z(y,x)^{\alpha p}\mu_Z(B(y,d_Z(y,x)))}\,d\mu_Z(x)\\
&= \int_{Z}\!\frac{|f(y)-f(x)|^p}{d_Z(y,x)^{\alpha p}\mu_Z(B(y,d_Z(y,x)))}\,d\mu_Z(x).
\end{align*}
An application of Fubini's theorem then yields~\eqref{eq:B-P}. 
\end{proof}

\subsection{Uniform domains}

Uniform domains were first introduced by Martio and Sarvas~\cite{MaSa} in the context of quasiconformal mappings between 
Euclidean domains, and
since then, they have been used extensively in different contexts, including quaisconformal mapping theory, potential theory, and PDEs.

If $(Z,d_Z)$ is a complete metric space and $\Om$ is a locally compact, non-complete domain in $Z$, its boundary 
is the set $\partial\Om:=\overline{\Om}\setminus \Om$,
where $\overline{\Om}$ is the metric completion of the non-complete space $\Om$ with respect to the metric $d_Z$. 
For $z\in Z$, we set 
\[
d_\Om(z):=\dist(z,\partial \Om):=\min\{d_Z(z,x)\, :\, x\in\partial\Om\}.
\] 
Since $\Om$ is locally compact,
it follows that $\Om$ is open in $\overline{\Om}$ and so $d_\Om(z)>0$ when $z\in\Om$.

The domain $\Om$ 
 is said to be a \emph{uniform domain} if there is a constant $A\ge 1$ such that
whenever $x,y\in\Om$, we can find a curve $\gamma$ in $\Om$ with end points $x,y$ such that 
\begin{enumerate}
\item[(i)] the length $\ell(\gamma)\le A\, d_Z(x,y)$,
\item[(ii)] for each point $z$ in the trajectory of $\gamma$ we have 
\[
 \min\{\ell(\gamma_{x,z}),\ell(\gamma_{z,y})\}\le A\,d_\Om(z),
\]
where $\gamma_{x,z}$ denotes any segment of $\gamma$ with end points $x,z$, and a similar interpretation for $\gamma_{z,y}$ holds.
\end{enumerate}

From~\cite{BS} we know that if $\Om$ is a uniform domain in a metric measure space $(Z,d_Z,\mu_Z)$ such that the metric measure space is 
doubling and supports a $p$-Poincar\'e inequality, then the restriction of the measure $\mu_Z$ and the metric $d_Z$ to $\Om$ also yields 
a metric measure space that is doubling and supports a $p$-Poincar\'e inequality.

\subsection{Standing assumptions}

Let $(\Om,d,\mu)$ be an unbounded, locally compact metric measure space such that $\Om$ is uniform in its completion 
$\overline\Om$. We assume $(\Om,d,\mu)$ is doubling 
and satisfies a $p$-Poincar\'{e} inequality, $1\leq p<\infty$, and that there exists a 
non-atomic
Borel regular measure $\nu$ on $\partial\Om$ that is 
$\theta$--codimensional to $\mu$, $0<\theta<p$, in the sense that there exists  $C\geq 1$ for which
\begin{equation}\label{eq:codim}
C^{-1}\frac{\mu(B(\zeta,r)\cap\Om)}{r^\theta}\leq\nu(B(\zeta,r)\cap\partial\Om)\leq C\frac{\mu(B(\zeta,r)\cap\Om)}{r^\theta}
\end{equation}
holds for all $\zeta\in\partial\Om$ and $r>0$. Note that $\mu$ being doubling on $\Om$ implies that $\nu$ is doubling on $\partial\Om$. 
	
	We will often consider $\Om$ as a domain living within the metric measure space $(\overline\Om,d,\mu)$, where 
$\mu$ is extended by zero to $\partial\Om$. Since $\mu$ is doubling on $\Om$, its extension is doubling 
on $\overline\Om$. Moreover, since $\Om$ supports a $p$-Poincar\'e inequality, then so does 
$\overline{\Om}$, see~\cite[Proposition~7.1]{AS}.
Hence we have that $D^{1,p}(\Om)=D^{1,p}(\overline{\Om})$ and $N^{1,p}(\Om)=N^{1,p}(\overline{\Om})$. 

As discussed in Subsection~\ref{sub:capacities}, under the above standing assumptions, it follows from 
Proposition~\ref{prop:Lebesgue-pt} that for $u\in D^{1,p}(\overline{\Om})$ 
the complement of the Lebesgue points of $u$ has Sobolev $p$-capacity zero. Hence, by~\cite[Proposition~3.11, Lemma~8.1]{GKS}
we know that $\nu$-almost every point in $\partial\Om$ is a Lebesgue point of $u$.
For greater details on the above, we refer the reader to Subsection~\ref{Sec3.2} below.

\section{On traces}\label{Sec:3}

In this section, we consider the trace of Dirichlet-Sobolev functions under the aforementioned standing assumptions.  
We can assume also without loss
of generality that $\Om$ is $A$-uniform domain with $A\ge 2$.

\subsection{Constructing cones}\label{subsec:cones}

Here we fix a parameter $\tau\ge 1$ (in our application, we will choose $\tau=\lambda$ where $\lambda$ is the scaling factor
on the right-hand side of the Poincar\'e inequality). The following construction is based on~\cite[Lemma~4.3]{BS}. 
Let $\xi,\zeta\in\partial\Om$, and let $\gamma$ be a uniform curve in $\Om$ with end points $\zeta,\xi$, that is,
$\gamma:[0,\ell(\gamma)]\to\overline{\Om}$ such that $\gamma(0)=\xi$, $\gamma(\ell(\gamma))=\zeta$,
and $\gamma((0,\ell(\gamma)))\subset\Om$ is a uniform curve. 
Let $x_0=\gamma(\ell(\gamma)/2)$, the mid-point of the curve $\gamma$.
We focus on the subcurve $\gamma_{\xi,x_0}$ of $\gamma$ to construct the balls $B_k$ for $k\ge 0$, with the
similar construction for $\gamma_{x_0,\zeta}$ giving balls $B_k$ for $k<0$.

Let $r_0=\tfrac{d_\Om(x_0)}{16\tau}$, and set $B_0=B(x_0,r_0)$. Next, let $x_1$ be the point in the trajectory of
$\gamma_{\xi,x_0}$ to be the 
last point at which $\gamma_{\xi,x_0}$ leaves the ball $B_0$ so that $\gamma_{\xi,x_1}$
does not intersect $B_0$. If $d_\Om(x_1)\ge \tfrac{d_\Om(x_0)}{2}$, then we choose $r_1=r_0$; if
$d_\Om(x_1)<\tfrac{d_\Om(x_0)}{2}$, then we set $r_1=\tfrac{d_\Om(x_1)}{16\tau}$. We then set 
$B_1=B(x_1,r_1)$. Continuing inductively, once $x_k$ and $r_k$ have been selected for some positive integer $k$,
let $x_{k+1}$ be the last point in $\gamma_{\xi,x_0}$ at which $\gamma_{\xi,x_0}$ leaves 
$\bigcup_{j=0}^k B_k$, namely, $\gamma_{\xi,x_{k+1}}$ does not intersect $\bigcup_{j=0}^k B_k$,
but $\gamma_{x_{k+1},x_k}\setminus\{x_{k+1}\}\subset \bigcup_{j=0}^kB_k$. We then set
$r_{k+1}=r_k$ if $d_\Om(x_{k+1})\ge 8\tau r_k$, and $r_{k+1}=\tfrac{d_\Om(x_{k+1})}{16\tau}$ otherwise.
Note that in either case,
\begin{equation}\label{eq:rad-dOm}
d_\Om(x_{k+1})\ge 8\tau r_{k+1}.
\end{equation}

Now we consider the properties of the chain of balls $B_k$, $k=0, 1,\ldots.$ Clearly, $B_k\cap B_{k+1}$ is not empty.
We fix a non-negative integer $j$ such that $r_j=\tfrac{d_\Om(x_j)}{16\tau}$,
and let $k>j$ be such that $r_k=r_j$. Let $L=\ell(\gamma_{\xi,x_j})$, and
$l=\ell(\gamma_{x_k,x_j})$. As $r_k=r_j$, we have that $d_\Om(x_k)\ge 8\tau r_j=d_\Om(x_j)/2$. It then follows from
the $A$-uniformity of $\gamma$  that
\[
\frac{d_\Om(x_j)}{2}\le d_\Om(x_k)\le d(x_k,\xi)\le L-l\le Ad_\Om(x_j)-l.
\]
Thus, $l\le (A-\tfrac12)d_\Om(x_j)$, and so
\begin{equation}\label{eq:length-control}
k-j\le \frac{l}{r_j}\le (A-\tfrac12)\, \frac{d_\Om(x_j)}{d_\Om(x_j)/(16\tau)}\le 16\tau A.
\end{equation}
If $k=j+1$ such that $r_k\ne r_j$, then $r_k<r_j/2$, and so we have that $\lim_kr_k=0$. As each $x_k$ lies on the
curve $\gamma_{\xi,x_0}$, it follows that $\lim_kx_k=\xi$. 

Next, fix a non-negative integer $j$ for which $r_j=\tfrac{d_\Om(x_j)}{16\tau}$, 
and let $k>j$ be the smallest integer for which $r_k\ne r_j$. Then we know that $r_k=\tfrac{d_\Om(x_k)}{8\tau}$ and
$d_\Om(x_k)<\tfrac{d_\Om(x_j)}{2}$, with $d_\Om(x_{k-1})\ge \tfrac{d_\Om(x_j)}{2}$. 
It follows from triangle
inequality that
\[
d_\Om(x_k)\ge d_\Om(x_{k-1})-d(x_k,x_{k-1})\ge \frac{d_\Om(x_j)}{2}-r_j
=\left[1-\frac{1}{8\tau}\right] \, \frac{d_\Om(x_j)}{2},
\]
and so
\begin{equation}\label{eq:rad-control}
\left[1-\frac{1}{8\tau}\right] \, \frac{d_\Om(x_j)}{2}\le d_\Om(x_k)<\frac{d_\Om(x_j)}{2}.
\end{equation}
From~\eqref{eq:length-control} and~\eqref{eq:rad-control} we see that there is some constant $K>1$ (which depends
only on $A$ and $\tau$) such that for each $k\ge 0$ we have %{\color{red} [Removed stuff. Nages]}
%\begin{equation}\label{eq:rk-vs-2-k}
%\frac{1}{K}\, 2^{-k}\, d_\Om(x_0)\le r_k\le K\, 2^{-k}d_\Om(x_0)
%\end{equation}
%and 
\begin{equation}\label{eq:rk-vs-dOm}
8\tau r_k\le d_\Om(x_k)\le K\, r_k.
\end{equation}
By the $A$-uniformity of $\gamma$ we also have that 
\[
d_\Om(x_k)\le d(\xi, x_k)\le A\, d_\Om(x_k).
\]
It follows that for $x\in 4\tau B_k$, we have that 
\[
d_\Om(x)\approx d_\Om(x_k)\approx d(x,\xi)\approx d(x_k,\xi).
\]
Now suppose that $k$ and $j$ are non-negative integers with $k>j$ such that $4\tau B_k\cap 4\tau B_j$ is non-empty
and that $r_j\ne r_k$. Then by~\eqref{eq:rad-dOm}, we have
\begin{equation}\label{eq:gen-x}
d_\Om(x_j)-d_\Om(x_k)\le d(x_j,x_k)\le 4\tau(r_j+r_k)\le \frac{d_\Om(x_j)+d_\Om(x_k)}{2},
\end{equation}
from which we obtain $d_\Om(x_j)\le 3\, d_\Om(x_k)$. Note that for positive integers $m,n$ with $m\ne n$, it is possible
to have $r_n=r_m$. As pointed out in the discussion above, if $r_m\ne r_{m-1}=r_n=\tfrac{d_\Om(x_n)}{8\tau}$, then 
$d_{\Om}(x_m)<\tfrac{d_\Om(x_n)}{2}$. If in the string of positive integers between $j$ and $k$ we had $N$ distinct
values for $r_m$, $j\le m\le k$, then by~\eqref{eq:rad-control}, necessarily we have 
$d_\Om(x_k)<\tfrac{d_\Om(x_j)}{2^{N-1}}\le \frac{3\, d_\Om(x_k)}{2^{N-1}}$, and so we must have $2^{N-1}<3$,
that is, $N\le 2$. It follows now from~\eqref{eq:length-control} that 
%{\color{red} [Modified here and above. Nages]}
%Now by~\eqref{eq:rk-vs-2-k} and~\eqref{eq:rk-vs-dOm}
%we see that 
$2^{-j}\le 2^{-k+N_0}$ for some positive integer $N_0$ that depends solely on $A$ and $\tau$, % (and hence
%solely on $A$ and $\tau$), 
that is, $k\le j+N_0$. Thus we have shown that if $k,j$ are non-negative integers so that
$4\tau B_j\cap 4\tau B_k$ is non-empty, then $|k-j|\le N_0$. Combining this with~\eqref{eq:length-control} we see that
there is a constant $C\ge 1$ such that 
\begin{equation}\label{eq:bdd-overlap}
\sum_{k=0}^\infty \chi_{4\tau B_k}\le C,
\end{equation}
that is, we have a bounded overlap of the enlarged balls $4\tau B_k$.

The cones $C[\xi,\zeta]$ and $C[\zeta,\xi]$ are the sets
\begin{equation}\label{eq:Cones}
C[\xi,\zeta]:=\bigcup_{k=0}^\infty 4\tau B_k,\qquad C[\zeta,\xi]:=\bigcup_{k=0}^\infty 4\tau B_{-k}.
\end{equation}
Note also that $d_\Om(x_0)\approx d(\zeta,\xi)$ with the comparison constant depending solely on $A$.

\subsection{The co-dimensional measure on $\partial\Om$ and the existence of traces}\label{Sec3.2}

Recall that we assume $\Om$ to support a $p$-Poincar\'e inequality. It follows that 
given a function $u\in D^{1,p}(\Om)$, and a compactly supported Lipschitz function $\eta$ on $\Om$, the function
$\eta u$ is in the inhomogeneous Sobolev class $N^{1,p}(\Om)$. It follows from~\cite[Proposition~7.1]{AS} that 
$\eta u$ has an extension to $\partial\Om$ such that the extended function lies in $N^{1,p}(\overline{\Om})$.
As the minimal $p$-weak upper gradient of a function is determined by the local behavior of the function, it follows that
$u$ itself has an extension to $\partial\Om$ such that the extended function lies in $D^{1,p}(\overline{\Om})$;
that is, $D^{1,p}(\Om)=D^{1,p}(\overline{\Om})$.

 From~\cite[Proposition~3.11]{GKS} (with $U=\overline{\Om}$ and $\mu_Z$ the measure $\mu$,
and with $A=E\subset\partial\Om\subset U$)
we know that whenever $E\subset\partial\Om$ is a set such that
the Sobolev capacity $\text{Cap}_p^{\overline{\Om}}(E)=0$, then necessarily the codimensional Hausdorff measure of $E$,
$\mathcal{H}^{-t}(E)$, is zero for $0<t<p$. From~\cite[Lemma~8.1]{GKS} we know that when $0<\theta<p$, necessarily
$\mathcal{H}^{-\theta}\vert_{\partial\Om}\approx \nu$, and so it follows that
$\nu(E)=0$.

Having established that $u\in D^{1,p}(\Om)$ has an extension to a function in $D^{1,p}(\overline{\Om})$ and that
(by the Poincar\'e inequality on $\overline{\Om}$) $p$-capacity almost every point in $\partial\Om$ is a Lebesgue point
of $u$, and that $p$-capacity zero subsets of $\partial\Om$ are $\nu$-null,
see~\cite[Theorem~9.2.8]{HKST},  
we have that $\nu$-a.e. point in $\partial\Om$ is a Lebesgue point of 
$u\in D^{1,p}(\overline{\Om})$. Thus for each $u\in D^{1,p}(\Om)$ there is a set $E_u\subset\partial\Om$ with $\nu(E_u)=0$
such that whenever $\zeta\in \partial\Om\setminus E_u$, there is a real number, denoted $Tu(\zeta)$, such that
\[
\lim_{r\to 0^+}\jint_{B(\zeta,r)\cap\Om}|u-Tu(\zeta)|\, d\mu=0.
\]

\subsection{The trace theorem}

In this subsection we finally identify the trace relationship, the first part of Theorem~\ref{thm:main}.

\begin{thm}\label{thm:trace}
Let $1\leq p <\infty$ and $0<\theta<p$. Then there is a bounded linear trace operator 
$T:D^{1,p}(\Om)\to HB^{1-\theta/p}_{p,p}(\partial\Om)$ such that when $u\in D^{1,p}(\Om)$, we have
\[
 Tu(\zeta)=\lim_{r\to 0^+} \jint_{B(\zeta,r)}u\, d\mu
\]
for $\nu$-almost every $\zeta\in\partial\Om$. 
\end{thm}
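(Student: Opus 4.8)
The goal is to show that the operator $T$ constructed pointwise in Subsection~\ref{Sec3.2} is bounded and linear from $D^{1,p}(\Om)$ into $HB^{1-\theta/p}_{p,p}(\partial\Om)$; linearity is immediate from the defining limit, so the content is the norm estimate. By the characterization~\eqref{rem:partition} applied on $(\partial\Om,d,\nu)$ (which is doubling and non-atomic), it suffices to bound
\[
\sum_{l\in\Z}\frac{1}{2^{l(1-\theta/p)p}}\int_{\partial\Om}\jint_{B(\zeta,C2^l)\cap\partial\Om}\!|Tu(\zeta)-Tu(\xi)|^p\,d\nu(\xi)\,d\nu(\zeta)
\]
by a constant multiple of $\|u\|_{D^{1,p}(\Om)}^p=\int_\Om g_u^p\,d\mu$. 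First I would replace $Tu(\zeta)$ and $Tu(\xi)$ by averages of $u$ over Whitney-type balls: for $\zeta,\xi\in\partial\Om$ with $d(\zeta,\xi)\approx 2^l$, use the cone chain $C[\xi,\zeta]$ from Subsection~\ref{subsec:cones} with $\tau=\lambda$, writing $Tu(\zeta)-Tu(\xi)$ as a telescoping sum of differences $u_{B_k}-u_{B_{k+1}}$ of consecutive balls along the chain (together with the symmetric chain from the midpoint $x_0$). Since consecutive balls overlap and have comparable radii, a standard Poincar\'e-chain argument bounds $|u_{B_k}-u_{B_{k+1}}|$ by $C\,r_k\,(\jint_{\lambda B_k}g_u^p\,d\mu)^{1/p}$, and the telescoping sum is controlled by $\sum_k r_k(\jint_{\lambda B_k}g_u^p\,d\mu)^{1/p}$.

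The second step is to convert this into an integral over $\partial\Om$ against $g_u$. Here the key geometric fact from the cone construction is~\eqref{eq:rk-vs-2-k}–\eqref{eq:rk-vs-dOm}: the $k$-th ball $B_k$ in the chain joining $\zeta$ and $\xi$ has radius $r_k\approx 2^{-k}d(\zeta,\xi)$ and satisfies $d_\Om(x)\approx r_k\approx d(x,\xi)$ for $x\in 4\tau B_k$, and the enlarged balls have bounded overlap~\eqref{eq:bdd-overlap}. I would group the balls in the chain by dyadic scale: those $B_k$ with $r_k\approx 2^{m}$ (for $m\le l$) contribute, after using H\"older to pass from the $\ell^1$-sum over $k$ to an $\ell^p$-sum and summing the geometric tail coming from $r_k=2^{-k}d(\zeta,\xi)$, a term like $\sum_{m\le l}2^{m\theta/p}\cdot 2^{m}\big(\text{local average of }g_u^p\big)^{1/p}$ or similar; the precise bookkeeping mirrors~\cite{Mal}. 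After raising to the $p$-th power and integrating $d\nu(\xi)\,d\nu(\zeta)$ over $\{d(\zeta,\xi)\approx 2^l\}$, I would use the codimension condition~\eqref{eq:codim} to replace $\nu$-measures of boundary balls by $\mu(B(\cdot,r)\cap\Om)/r^\theta$, so that integration in $\zeta$ (or $\xi$) over the boundary near a fixed Whitney ball $B$ picks up a factor $\approx \mu(\lambda B)/(\rad B)^\theta$. This is designed so that the weights $2^{-l(1-\theta/p)p}$, the $r_k^p$ from Poincar\'e, and the $\nu$-measures combine to leave exactly $\int_{\lambda B_k}g_u^p\,d\mu$ with a scale-summable coefficient.

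The third step is to sum over $l\in\Z$ and over the chains and reorganize the resulting sum as an integral over $\Om$. Every Whitney-type ball $B=\lambda B_k$ that appears is anchored at some $x\in\Om$ with $\rad B\approx d_\Om(x)$, and for fixed such $B$ the set of pairs $(\zeta,\xi,l,k)$ for which $B$ arises in the chain $C[\xi,\zeta]$ is constrained: $\zeta,\xi$ must lie within $C\,d_\Om(x)$ of $x$, so $l$ is essentially determined and the number of relevant $k$ is bounded by~\eqref{eq:length-control}. Using the bounded overlap~\eqref{eq:bdd-overlap} of the enlarged Whitney balls and a maximal-function/covering argument (each point of $\Om$ lies in boundedly many of the relevant $\lambda B_k$'s at each scale, and the scales are geometric), the double sum collapses to $C\int_\Om g_u^p\,d\mu$, as desired. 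Finally I would record that the pointwise-defined $Tu$ agrees with $\lim_{r\to0^+}\jint_{B(\zeta,r)}u\,d\mu$ $\nu$-a.e., which was already established in Subsection~\ref{Sec3.2} since $\mu$ is extended by zero and $B(\zeta,r)\cap\Om$ carries all the $\mu$-mass of $B(\zeta,r)$.

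\textbf{Main obstacle.} The delicate point is the interchange of summations/integrations in the second and third steps: one must choose the grouping of the chain balls by scale and apply H\"older's inequality at exactly the right place so that, after invoking~\eqref{eq:codim}, the geometric series in the scale parameter converges (this is where $0<\theta<p$ is used — for $\theta\ge p$ the relevant series diverges) and no logarithmic loss appears. Equivalently, one needs the bounded-overlap property~\eqref{eq:bdd-overlap} and the comparability $r_k\approx d(x,\xi)\approx d_\Om(x)$ on $4\tau B_k$ to be strong enough that each Whitney ball is ``charged'' only a bounded number of times across all boundary pairs at a given scale; verifying this fibering of the index set is the technical heart of the argument, carried out in analogy with~\cite{Mal} but now with $\Z$-indexed (rather than finitely many) dyadic scales because $\partial\Om$ is unbounded, so one must also check absolute convergence of the doubly-infinite sum in $l$.
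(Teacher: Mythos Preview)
Your overall architecture matches the paper's: telescope along the cone chain of Subsection~\ref{subsec:cones} with $\tau=\lambda$, apply the Poincar\'e inequality ball-by-ball, then H\"older, then reorganize via Tonelli into an integral of $g_u^p$ over $\Om$. One organizational difference is that the paper works with the double-integral characterization~\eqref{eq:B-P} rather than the dyadic sum~\eqref{rem:partition}; either can be made to work.

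There is, however, a genuine gap in your third step. You assert that for a fixed chain ball $B=\lambda B_k$ centered at $x$, \emph{both} endpoints $\zeta,\xi$ must lie within $C\,d_\Om(x)$ of $x$, so that ``$l$ is essentially determined.'' This is false: only the \emph{near} endpoint (say $\zeta$, if $B_k$ lies on the $\zeta$-half of the chain) satisfies $d(x,\zeta)\approx d_\Om(x)$; the far endpoint $\xi$ is constrained only by $d(\zeta,\xi)\gtrsim d_\Om(x)$ and can be arbitrarily distant. Consequently a fixed $x$ is hit by chains at \emph{all} scales $l$ with $2^l\gtrsim d_\Om(x)$, and your counting argument as stated would produce a divergent sum $\sum_{l\ge m}1$.

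The paper resolves exactly this issue by an $\eps$-trick in the H\"older step: choose $\eps>0$ with $\theta+\eps<p$ and split $r_k=r_k^{1-(\theta+\eps)/p}\cdot r_k^{(\theta+\eps)/p}$ before applying H\"older. The first factor sums to $\approx d(\zeta,\xi)^{1-(\theta+\eps)/p}$, leaving after codimensionality a residual weight $d(\zeta,\xi)^{-\eps}$ in the estimate for $|u(\zeta)-u(\xi)|^p/d(\zeta,\xi)^{p-\theta}$. After Tonelli, the integration in the far endpoint $\xi$ over $\partial\Om\setminus B(\zeta,d_\Om(x)/A)$ then converges as the annular sum $\sum_{j\ge1}(2^jd_\Om(x))^{-\eps}\approx d_\Om(x)^{-\eps}$, which precisely cancels the $d(x,\zeta)^\eps\approx d_\Om(x)^\eps$ left over. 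This is where $\theta<p$ enters: it guarantees room for such an $\eps>0$. Your ``Main obstacle'' paragraph correctly senses that the H\"older placement is delicate and that $\theta<p$ is needed for convergence, but the mechanism you describe in step~3 does not do the job; you need either this $\eps$-device or, alternatively, to build half-chains from each boundary point that depend only on that point and the scale (not on the other endpoint) so that the far-endpoint average in~\eqref{rem:partition} integrates out trivially.
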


In the above setting, we can also consider $\nu$ to be a measure on $\overline{\Om}$ by extending $\nu$ by zero to $\Om$;
a similar null-extension of $\mu$ to $\partial\Om$ would allow us to simplify notation (by not needing to use $B(\zeta,r)\cap\partial\Om$
but merely using $B(\zeta,r)$ for instance in talking about the measure $\nu$ of the balls).

\begin{proof}[Proof of Theorem~\ref{thm:trace}]
For $\zeta,\xi\in\partial\Om$ that are $\mu$-Lebesgue points of $u$, we 
use the chain of balls $B_k$, $k\in\Z$ from Subsection~\ref{subsec:cones} above, with the choice of $\tau=\lambda$,
where $\lambda$ is the scaling constant associated with the Poincar\'e inequality. 

Let $u\in D^{1,p}(\Om)=D^{1,p}(\overline{\Om})$; the discussion of Subsection~\ref{Sec3.2} tells us that 
$\nu$-almost every point $\zeta\in\partial\Om$ is a $\mu$-Lebesgue point of such $u$, and hence $Tu(\zeta)$ is well-defined.
For the rest of the proof, we will continue to denote $Tu(\zeta)$ by $u(\zeta)$ as this does not give rise to conflict of notation.
Then, fixing $\eps>0$ such that $\theta+\eps<p$, 
\begin{align*}
|u(\zeta)-u(\xi)|\le \sum_{k\in\Z} |u_{B_k}-u_{B_{k+1}}|
  &\lesssim \sum_{k\in\Z} r_k \left(\jint_{4\lambda B_k}g_u^p\, d\mu\right)^{1/p} \\
  &= \sum_{k\in\Z} r_k^{1-(\theta+\eps)/p}\, r^{(\theta+\eps)/p}\left(\jint_{4\lambda B_k}g_u^p\, d\mu\right)^{1/p}\\
  &\le \left(\sum_{k\in\Z} r_k^{\theta+\eps}\jint_{4\lambda B_k}g_u^p\, d\mu\right)^{1/p}
      \left(\sum_{k\in\Z} r_k^{\tfrac{p-\theta-\eps}{p-1}}\right)^{1-1/p}.
\end{align*}
Note that
\[
\sum_{k\in\Z} r_k^{\tfrac{p-\theta-\eps}{p-1}}\approx d(\xi,\zeta)^{\tfrac{p-\theta-\eps}{p-1}}
\, \sum_{k\in\Z} 2^{-k\, \tfrac{p-\theta-\eps}{p-1}}.
\]
Since the sum on the right-hand side of the above expression is finite (and independent of $\xi,\zeta$), it follows that
\[
\left(\sum_{k\in\Z} r_k^{\tfrac{p-\theta-\eps}{p-1}}\right)^{1-1/p}\approx d(\zeta,\xi)^{1-\tfrac{\theta+\eps}{p}}.
\]
Hence,
\[
|u(\zeta)-u(\xi)|^p\lesssim d(\xi,\zeta)^{p-\theta-\eps}\ \sum_{k\in\Z} r_k^{\theta+\eps}\jint_{4\lambda B_k}g_u^p\, d\mu
  \approx d(\xi,\zeta)^{p-\theta-\eps}\ \sum_{k\in\Z}\frac{r_k^{\theta+\eps}}{\mu(B_k)}\int_{4\lambda B_k}g_u^p\, d\mu.
\]
By the codimensionality condition on $\partial\Om$, and by the doubling property of $\mu$, we have
\[
\mu(B_k)\approx\mu(4AB_k)\approx \mu(B(\omega_k, r_k)\cap\Om)\approx r_k^\theta\, \nu(B(\omega_k,r_k)\cap\partial\Om),
\]
where $\omega_k=\zeta$ for $k>0$ and $\omega_k=\xi$ for $k\le 0$. It follows that
\[
\frac{|u(\zeta)-u(\xi)|^p}{d(\xi,\zeta)^{p-\theta}}\lesssim d(\xi,\zeta)^{-\eps} \
  \sum_{k\in\Z}\frac{r_k^{\eps}}{\nu(B(\omega_k,r_k)\cap\partial\Om)}\int_{4\lambda B_k}g_u^p\, d\mu.
\]

Setting 
\[
I:=\sum_{k=0}^\infty \frac{r_k^{\eps}}{\nu(B(\zeta,r_k)\cap\partial\Om)}\int_{4\lambda B_k}g_u^p\, d\mu
\]
and
\[
II:=\sum_{k=1}^{-\infty}\frac{r_k^{\eps}}{\nu(B(\xi,r_k)\cap\partial\Om)}\int_{4\lambda B_k}g_u^p\, d\mu,
\]
we have the following:
\begin{align*}
I&\approx \sum_{k=1}^\infty  \int_{4\lambda B_k}\frac{d(x,\zeta)^\eps g_u(x)^p}{\nu(B(\zeta,d(\zeta,x))\cap\partial\Om)}\, d\mu(x)
 =\int_{C[\zeta,\xi]}\frac{d(x,\zeta)^\eps g_u(x)^p}{\nu(B(\zeta,d(\zeta,x))\cap\partial\Om)}\, d\mu(x),\\
II&\approx \sum_{k=0}^{-\infty}\int_{4\lambda B_k}\frac{d(x,\xi)^\eps g_u(x)^p}{\nu(B(\xi,d(\xi,x))\cap\partial\Om)}\, d\mu(x)
 =\int_{C[\xi,\zeta]}\frac{d(x,\xi)^\eps g_u(x)^p}{\nu(B(\xi,d(\xi,x))\cap\partial\Om)}\, d\mu(x),
\end{align*}
where we have used the fact that $r_k\approx d(x,\omega_k)\approx d_\Om(x_k)$ for each $x\in 4\lambda B_k$,
see~\eqref{eq:gen-x} together with~\eqref{eq:rk-vs-dOm}.
We have denoted in the above $C[\zeta,\xi]=\bigcup_{k=0}^\infty 4\lambda B_k$ and 
$C[\xi,\zeta]=\bigcup_{k=1}^{-\infty} 4\lambda B_k$, as in Subsection~\ref{subsec:cones}, and used the fact that the balls 
$4\lambda B_k$ are of bounded
overlap, see~\eqref{eq:bdd-overlap}. 

Now, we write
\[
\frac{|u(\zeta)-u(\xi)|^p}{d(\xi,\zeta)^{p-\theta}}
\lesssim 
\int_{C[\zeta,\xi]}\frac{d(\xi,\zeta)^{-\eps}d(x,\zeta)^\eps g_u(x)^p}{\nu(B(\zeta,d(\zeta,x))\cap\partial\Om)}\, d\mu(x)
+\int_{C[\xi,\zeta]}\frac{d(\xi,\zeta)^{-\eps}d(x,\xi)^\eps g_u(x)^p}{\nu(B(\xi,d(\xi,x))\cap\partial\Om)}\, d\mu(x).
\]
Hence,
\[
\int_{\partial\Om}\int_{\partial\Om}\frac{|u(\zeta)-u(\xi)|^p}{d(\xi,\zeta)^{p-\theta}\nu(B(\zeta,d(\zeta,\xi))\cap\partial\Om)}\, d\nu(\zeta)\, d\nu(\xi)
  \lesssim E+F,
\]
where
\[
E:=\int_{\partial\Om}\int_{\partial\Om}\int_{C[\zeta,\xi]}\frac{d(\xi,\zeta)^{-\eps}d(x,\zeta)^\eps g_u(x)^p}{\nu(B(\zeta,d(\zeta,x))\cap\partial\Om)\, \nu(B(\zeta,d(\zeta,\xi))\cap\partial\Om)}\, d\mu(x)\, d\nu(\zeta)\, d\nu(\xi)
\]
and
\[
F:=\int_{\partial\Om}\int_{\partial\Om}\int_{C[\xi,\zeta]}\frac{d(\xi,\zeta)^{-\eps}d(x,\xi)^\eps g_u(x)^p}{\nu(B(\xi,d(\xi,x))\cap\partial\Om)\, \nu(B(\zeta,d(\zeta,\xi))\cap\partial\Om)}\, d\mu(x)\, d\nu(\zeta)\, d\nu(\xi).
\]
We estimate $E$ as follows. We first note that for $x\in\Om$, if $x\in C[\zeta,\xi]$ then necessarily 
$d(\xi,\zeta)\ge d_\Om(x)/A$ by the uniformity of the curve that was used to generate $C[\zeta,\xi]$, and, moreover,
$Ad_\Om(x)\ge d(\zeta,x)$. Therefore,
\begin{align*}
E&=\int_{\partial\Om}\int_{\partial\Om}\int_{\Om}\frac{d(\xi,\zeta)^{-\eps}d(x,\zeta)^\eps g_u(x)^p\, \chi_{C[\zeta,\xi]}(x)}{\nu(B(\zeta,d(\zeta,x))\cap\partial\Om)\, \nu(B(\zeta,d(\zeta,\xi))\cap\partial\Om)}\, d\mu(x)\, d\nu(\zeta)\, d\nu(\xi)\\
&\le \int_{\partial\Om}\int_{\partial\Om}\int_{\Om} \frac{d(\xi,\zeta)^{-\eps}d(x,\zeta)^\eps g_u(x)^p\, \chi_{B(x,Ad_\Om(x))}(\zeta)\, \chi_{\partial\Om\setminus B(\zeta,d_\Om(x)/A)}(\xi)}{\nu(B(\zeta,d(\zeta,x))\cap\partial\Om)\, \nu(B(\zeta,d(\zeta,\xi))\cap\partial\Om)}\, d\mu(x)\, d\nu(\zeta)\, d\nu(\xi)\\
&=\int_\Om g_u(x)^p\, \int_{\partial\Om}\int_{\partial\Om\setminus B(\zeta,d_\Om(x)/A)}\frac{d(\xi,\zeta)^{-\eps}d(x,\zeta)^\eps\, \chi_{B(x,Ad_\Om(x))}(\zeta)}{\nu(B(\zeta,d(\zeta,x))\cap\partial\Om)\, \nu(B(\zeta,d(\zeta,\xi))\cap\partial\Om)}\, d\nu(\xi)\, d\nu(\zeta)\, d\mu(x).
\end{align*}
In the last line above we used Tonelli's theorem.
To estimate the inner-most integral, for each positive integer $j$ we set 
\[
A_j=\left(B(\zeta, 2^jd_\Om(x)/A)\setminus B(\zeta, 2^{j-1}d_\Om(x)/A)\right)\cap\partial\Om,
\] 
and see from the doubling property of $\nu$ that
\begin{align*}
\int\limits_{\partial\Om\setminus B(\zeta,d_\Om(x)/A)}\frac{d(\xi,\zeta)^{-\eps}}{\nu(B(\zeta,d(\zeta,\xi))\cap\partial\Om)}\, d\nu(\xi)
&=\sum_{j=1}^\infty \int_{A_j}\frac{d(\xi,\zeta)^{-\eps}}{\nu(B(\zeta,d(\zeta,\xi))\cap\partial\Om)}\, d\nu(\xi)\\
&\approx \sum_{j=1}^\infty (2^jd_\Om(x)/A)^{-\eps}\\ & \approx d_\Om(x)^{-\eps}.
\end{align*}
Hence, as $d(x,\zeta)\approx d_\Om(x)$ when 
$\zeta\in B(x,Ad_\Om(x))\cap\partial\Om\subset \left(B(x,Ad_\Om(x))\setminus B(x,d_\Om(x))\right)\cap\partial\Om$, it follows that 
\begin{align*}
E&\lesssim 
\int_\Om g_u(x)^p\, \int_{B(x,Ad_\Om(x))\cap\partial\Om}\frac{d(x,\zeta)^\eps\, d_\Om(x)^{-\eps}}{\nu(B(\zeta,d(\zeta,x))\cap\partial\Om)}\, d\nu(\zeta)\, d\mu(x)\\
&\approx \int_\Om g_u(x)^p\, \int_{B(x,Ad_\Om(x))\cap\partial\Om}\frac{1}{\nu(B(\zeta,d(\zeta,x))\cap\partial\Om)}\, d\nu(\zeta)\, d\mu(x).
\end{align*}
Again, by the above observation about $d(\zeta,x)$ for $\zeta\in B(x,Ad_\Om(x))\cap\partial\Om$, it follows that
for each such $\zeta$ we have $\nu(B(\zeta,d(\zeta,x))\cap\partial\Om)\approx \nu(B(x,Ad_\Om(x))\cap\partial\Om)$ 
via the doubling property of $\nu$. Hence
\[
E\lesssim \int_\Om g_u(x)^p\, d\mu(x).
\]
A similar treatment of the term $F$ yields
\[
F\lesssim \int_\Om g_u(x)^p\, d\mu(x).
\]

In conclusion, we obtain the desired estimate
\[
\int_{\partial\Om}\int_{\partial\Om}\frac{|u(\zeta)-u(\xi)|^p}{d(\xi,\zeta)^{p-\theta}\nu(B(\zeta,d(\zeta,\xi))\cap\partial\Om)}\, d\nu(\zeta)\, d\nu(\xi)
  \lesssim \int_\Om g_u(x)^p\, d\mu(x).
\]
\end{proof}

\section{On extensions}\label{Sec:4}

In this section, we consider the extension of Besov functions from the boundary to the entire domain. 
To construct the extension operator, we consider a partition of unity subordinate to a Whitney cover.  

\subsection{Whitney coverings and constructing the extension operator}

	Since $\partial\Om$ is non-empty and $(\Om,d)$ is a doubling metric space, we are able to construct a Whitney 
decomposition of $\Om$; that is, a countable collection $W_\Om$ of balls $B_{i,j}=B(x_{i,j},r_{i,j})$, $i\in\Z$ and 
$j\in\N$, in $\Om$ satisfying the following:
 \begin{enumerate}
 \item[(i)] $\Om=\bigcup\limits_{i,j}B_{i,j}$;
 \item[(ii)] there exists a constant $C>0$ such that $\sum\limits_{i,j}\chi_{2B_{i,j}}\leq C$;
 \item[(iii)] for each $i\in\Z$, $2^{i-1}<r_{i,j}\leq 2^i$ for all $j\in\N$;
 \medskip
 \item[(iv)] and, for each $i\in\Z$ and $j\in\N$, $r_{i,j}=\frac{1}{8}\,d_\Om(x_{i,j})$.
 \end{enumerate}
The elements of $W_\Om$ are called Whitney balls. See for example~\cite[Proposition~4.1.15]{HKST}; a 
simple modification of the proof found
there yields our desired Whitney decomposition. 

\begin{remark}\label{rem:bnded-overlap}
	The constant $C$ in (ii) above depends only on $N$ from the definition of a doubling metric space, 
which in turn depends only on the doubling constant of $\mu$. In fact, we can choose this cover in such 
a way that for each $\sigma\ge 1$, there is a constant $N_\sigma$ which depends only on $\sigma$ and the 
doubling constant of $\mu$, such that for each
$i\in\Z$ and $j\in\N$ there are at most $N_\sigma$ many indices $k\in\N$ for which $\sigma B_{i,j}\cap\sigma B_{i,k}$ is non-empty.
\end{remark}
	
Note also that by construction, for $x\in 2B_{i,j}$ the triangle inequality gives that 
\[
d_\Om(x)\geq d_\Om(x_{i,j}) - d(x,x_{i,j})>8r_{i,j}-2r_{i,j}>0, 
\]
and so $2B_{i,j}\subset\Om$ for each $i\in\Z$ and $j\in\N$.
	
\begin{lem}\label{claim:intersect}
If $2B_{i,j}\cap B_{l,m}\neq\emptyset$, then $|i-l|\leq 3$.
\end{lem}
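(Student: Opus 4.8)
The plan is to use the Whitney ball properties (iii) and (iv) together with the fact that Whitney balls which intersect (even after dilation by $2$) have comparable distances to $\partial\Om$. First I would take a point $x\in 2B_{i,j}\cap B_{l,m}$. From (iii) we have $2^{i-1}<r_{i,j}\le 2^i$ and $2^{l-1}<r_{l,m}\le 2^l$, so it suffices to show that $r_{i,j}$ and $r_{l,m}$ are comparable up to a factor controlled by $2^3$; the claim $|i-l|\le 3$ then follows by taking logarithms base $2$.

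To get the comparability of the radii, I would use (iv), which says $r_{i,j}=\tfrac18 d_\Om(x_{i,j})$ and $r_{l,m}=\tfrac18 d_\Om(x_{l,m})$, so it is enough to compare $d_\Om(x_{i,j})$ and $d_\Om(x_{l,m})$. Since $x\in 2B_{i,j}$, the triangle inequality gives $d(x,x_{i,j})<2r_{i,j}$, and since $x\in B_{l,m}$ we get $d(x,x_{l,m})<r_{l,m}$. The function $d_\Om$ is $1$-Lipschitz, so
\[
d_\Om(x_{i,j})\le d_\Om(x)+d(x,x_{i,j})< d_\Om(x)+2r_{i,j}=d_\Om(x)+\tfrac14 d_\Om(x_{i,j}),
\]
which yields $d_\Om(x)\ge \tfrac34 d_\Om(x_{i,j})$, and similarly $d_\Om(x)\ge\tfrac78 d_\Om(x_{l,m})$; also $d_\Om(x)\le d_\Om(x_{i,j})+2r_{i,j}=\tfrac54 d_\Om(x_{i,j})$ and $d_\Om(x)\le d_\Om(x_{l,m})+r_{l,m}=\tfrac98 d_\Om(x_{l,m})$. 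Chaining these inequalities gives $d_\Om(x_{i,j})\le\tfrac43 d_\Om(x)\le \tfrac43\cdot\tfrac98 d_\Om(x_{l,m})=\tfrac32 d_\Om(x_{l,m})$ and, symmetrically, $d_\Om(x_{l,m})\le\tfrac87 d_\Om(x)\le\tfrac87\cdot\tfrac54 d_\Om(x_{i,j})=\tfrac{10}{7}d_\Om(x_{i,j})$. Hence $r_{i,j}\le\tfrac32 r_{l,m}$ and $r_{l,m}\le 2 r_{i,j}$ (using $\tfrac{10}{7}<2$).

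Finally, translating back via (iii): from $r_{i,j}\le\tfrac32 r_{l,m}$ we get $2^{i-1}<r_{i,j}\le\tfrac32 r_{l,m}\le\tfrac32\cdot 2^l$, so $2^{i-l}<3$, giving $i-l\le 1$; from $r_{l,m}\le 2 r_{i,j}$ we get $2^{l-1}<r_{l,m}\le 2r_{i,j}\le 2\cdot 2^i$, so $2^{l-i}<4$, giving $l-i\le 1$. In particular $|i-l|\le 1\le 3$, which is (slightly stronger than) the claim. There is no real obstacle here; the only mild subtlety is keeping the constants straight when chaining the Lipschitz estimates, but the bound $|i-l|\le 3$ is generous enough that the rough constants above already suffice. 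I would present the argument with a single clean chain of the four inequalities on $d_\Om(x)$, $d_\Om(x_{i,j})$, $d_\Om(x_{l,m})$ rather than tracking each fraction explicitly.
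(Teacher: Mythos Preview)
Your argument is correct and in fact yields the sharper bound $|i-l|\le 1$. Both your proof and the paper's rest on the same underlying fact---that $d_\Om$ is $1$-Lipschitz, so intersecting Whitney balls must have comparable distances to $\partial\Om$---but the organization differs. The paper argues by contrapositive: assuming $i>l$ (and then $l>i$), it lower-bounds $d(x_{i,j},x_{l,m})\ge d_\Om(x_{i,j})-d_\Om(x_{l,m})$ directly, subtracts the radii, and shows $\dist(2B_{i,j},B_{l,m})>0$ once $|i-l|>3$. You instead pick a common point $x\in 2B_{i,j}\cap B_{l,m}$ and pivot through $d_\Om(x)$, which is a bit more streamlined and avoids the case split; it also keeps tighter track of the constants, which is why you land at $|i-l|\le 1$ rather than $3$. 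Either route is fine for the paper's purposes, since only the qualitative conclusion $r_{i,j}\approx r_{l,m}$ is used downstream.
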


\begin{proof}
We begin by assuming that $i>l$. Then, by the triangle inequality, and properties (iii) and (iv), we have that 
\[
d(x_{l,m},x_{i,j})\geq d_\Om(x_{i,j})-d_\Om(x_{l,m}) = 8r_{i,j}-8 r_{l,m} > 2^{i+2}-2^{l+3}.
\]
This implies, still using property (iii), that 
\[
\dist(B_{l,m},2B_{i,j})\geq d(x_{l,m},x_{i,j})-2r_{i,j}-r_{l,m}>2^{i+2}-2^{l+3}-2^{i+1}-2^l >2^{i+1}-2^{l+4},
\]
which is positive if $i-l>3$. 
		
On the other hand, when $i<l$, similar calculations yield
\[
d(x_{i,j},x_{l,m})\geq d_\Om(x_{l,m}) - d_\Om(x_{i,j}) = 8 r_{l,m} - 8 r_{i,j}> 2^{l+2}-2^{i+3}
\]
and so 
\[
\dist(2B_{i,j},B_{l,m})\geq d(x_{i,j},x_{l,m}) - 2r_{i,j} - r_{l,m} > 2^{l+2} - 2^{i+3} - 2^{i+1} - 2^l  > 2^{l+1}-2^{i+4},
\]
which is positive if $l-i>3$.
\end{proof}
	
We now form a partition of unity subordinate to the Whitney decomposition $W_\Om$. Select 
functions $\pip_{i,j}$ satisfying the following: 
\begin{enumerate}
\item[(i')] $\chi_{\Om}=\sum\limits_{i,j}\pip_{i,j}$;
\item[(i'')] for each $i\in\Z$ and $j\in\N$, $0\leq\pip_{i,j}\leq\chi_{2B_{i,j}}$;
\medskip
\item[(iii')] and, for each $i\in\Z$ and $j\in\N$, $\pip_{i,j}$ is $C/r_{i,j}$--Lipschitz.
\end{enumerate}
	
Given a center $x_{i,j}\in\Om$ of the Whitney ball $B_{i,j}$, denote by $\hat{x}_{i,j}$ a closest point in $\partial\Om$; 
there may be more
than one such choice, but we fix one choice for each $B_{i,j}$. Then set 
$U_{i,j}:=B(\hat{x}_{i,j},r_{i,j})\cap\partial\Om$ and $U_{i,j}^*:=B(\hat{x}_{i,j}, 2^8r_{i,j})$. The number $2^8$ in the construction
of $U_{i,j}^*$ looks strange at this point of the discourse, but it is forced upon us in the proof in the next subsection, see
for instance~\eqref{radii} and~\eqref{osc}. But at this juncture, the reader will not go astray by replacing $2^8$ with any
large constant in visualizing $U_{i,j}^*$ and in the following lemma.

\begin{lem}\label{bdd-overlap-U}
There is a positive integer $N$ that depends only on the doubling constant of $\mu$ such that
for each fixed $i\in\Z$ and for each $j\in\N$,  there are at most $N$ number of sets $U_{i,k}^*$, $k\in\N$,
for which $U_{i,j}^*\cap U_{i,k}^*$ is non-empty. 
\end{lem}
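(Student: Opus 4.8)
The plan is to reduce the overlap statement for the dilated sets $U_{i,k}^*$ to the already-available bounded-overlap property of the Whitney balls $B_{i,k}$ of a fixed generation $i$ (see Remark~\ref{rem:bnded-overlap}, applied with a suitably large dilation factor $\sigma$). The key observation is that both $\hat x_{i,k}$ and the radius $2^8 r_{i,k}$ are controlled in terms of the Whitney ball $B_{i,k}=B(x_{i,k},r_{i,k})$: since $\hat x_{i,k}$ is a closest boundary point to $x_{i,k}$, we have $d(x_{i,k},\hat x_{i,k})=d_\Om(x_{i,k})=8 r_{i,k}$ by property (iv); hence $U_{i,k}^*=B(\hat x_{i,k},2^8 r_{i,k})\subset B(x_{i,k}, 8 r_{i,k}+2^8 r_{i,k})\subset B(x_{i,k}, 2^9 r_{i,k})\subset \sigma B_{i,k}$ for $\sigma=2^9$ (or any larger fixed power of $2$). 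Conversely the center $x_{i,k}$ lies in $\sigma B_{i,k}$ trivially.

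First I would fix $i\in\Z$ and $j\in\N$ and let $k\in\N$ be an index with $U_{i,j}^*\cap U_{i,k}^*\ne\emptyset$. From the inclusions above, together with $2^{i-1}<r_{i,j},r_{i,k}\le 2^i$ from property (iii), I would show that $\sigma B_{i,j}$ and $\sigma B_{i,k}$ intersect: if $w\in U_{i,j}^*\cap U_{i,k}^*$, then $d(x_{i,j},w)\le 2^9 r_{i,j}\le 2^{i+9}$ and $d(x_{i,k},w)\le 2^9 r_{i,k}\le 2^{i+9}$, so $d(x_{i,j},x_{i,k})\le 2^{i+10}$; since $r_{i,k}>2^{i-1}$, this gives $x_{i,j}\in B(x_{i,k}, 2^{11} r_{i,k})$, i.e.\ the balls $\sigma' B_{i,j}$ and $\sigma' B_{i,k}$ overlap for $\sigma'=2^{11}$. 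Then I would invoke Remark~\ref{rem:bnded-overlap} with this $\sigma'$: there are at most $N_{\sigma'}$ indices $k\in\N$ for which $\sigma' B_{i,j}\cap\sigma' B_{i,k}\ne\emptyset$, and $N_{\sigma'}$ depends only on $\sigma'$ (a fixed number) and the doubling constant of $\mu$. Setting $N:=N_{\sigma'}$ finishes the proof.

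Alternatively — and perhaps cleaner — one can avoid quoting Remark~\ref{rem:bnded-overlap} and argue directly by a separation/packing estimate: all the relevant $\hat x_{i,k}$ lie in a ball of radius comparable to $2^i$ around $\hat x_{i,j}$, they lie on $\partial\Om$, and any two distinct centers $x_{i,k}$, $x_{i,k'}$ of Whitney balls of the same generation $i$ whose doubles are disjoint satisfy $d(x_{i,k},x_{i,k'})\gtrsim 2^i$, which by the triangle inequality (and $d(x_{i,k},\hat x_{i,k})=8r_{i,k}\le 2^{i+3}$) forces a uniform lower bound on $d(\hat x_{i,k},\hat x_{i,k'})$; this makes the collection of such $\hat x_{i,k}$ a $c2^i$-separated subset of a ball of radius $C2^i$, which by the doubling property of $\mu$ (equivalently, of $\nu$ on $\partial\Om$) has cardinality bounded by a constant $N$ depending only on the doubling constant. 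I would present the first version as the main line and mention this as an aside.

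The main obstacle here is entirely bookkeeping of the numerical constants — making sure the dilation factor $\sigma$ chosen to swallow $U_{i,k}^*$ inside a multiple of $B_{i,k}$, and the factor needed to convert ``$U_{i,j}^*\cap U_{i,k}^*\ne\emptyset$'' into ``dilated Whitney balls intersect'', are consistent and do not secretly depend on $i$. This is routine because property (iii) pins $r_{i,k}$ to within a factor of $2$ of $2^i$ uniformly in $k$, so every comparison is scale-invariant in $i$; there is no genuine analytic difficulty, and the lemma is essentially a restatement of the bounded-overlap property of the Whitney decomposition transported to the boundary.
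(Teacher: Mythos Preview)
Your argument is correct and is essentially the paper's own proof: you bound $d(x_{i,j},x_{i,k})$ by a constant times $2^i$ via the triangle inequality through $\hat x_{i,j}$ and $\hat x_{i,k}$, conclude that the $2^{11}$-dilated Whitney balls intersect, and then invoke Remark~\ref{rem:bnded-overlap} with $\sigma=2^{11}$ to get $N=N_{2^{11}}$. The constants and the logic match the paper exactly; the alternative packing argument you sketch is also fine but is not needed.
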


\begin{proof}
Indeed, if $U_{i,j}^*$ intersects $U_{i,k}^*$, then we have
\[
d(x_{i,j},x_{i,k})\le d_\Om(x_{i,j})+d_\Om(x_{i,k})+d(\hat{x}_{i,j},\hat{x}_{i,k})=8(r_{i,j}+r_{i,k})+2^8(r_{i,j}+r_{i,k})
\le 2^9(r_{i,j}+r_{i,k}).
\]
As $r_{i,k}\le 2^i\le 2r_{i,j}$, it follows that $d(x_{i,j},x_{i,k})\le 2^{10}\, r_{i,j}$, and hence $2^{11}B_{i,j}\cap 2^{11}B_{i,k}$ is non-empty.
By the construction of the Whitney cover, it follows that there are at most $N=N_{2^{11}}$ number of such positive 
integers $k$ -- see Remark~\ref{rem:bnded-overlap}.
\end{proof}

We are finally able to construct the extension operator. Beginning with a function $f\in L^1_{\text{loc}}(\partial\Om)$, we 
construct an extension $F$ on $\Om$ by writing
\begin{equation}\label{construction}
	F(x)=\sum_{i,j}f_{U_{i,j}}\pip_{i,j}(x),
\end{equation}
where $U_{i,j}:=B(\hat{x}_{i,j},r_{i,j})\cap\partial\Om$ and $f_{U_{i,j}}:=\jint_{U_{i,j}}\!f\,d\nu$.
	
\subsection{The extension theorem}

	First, we show that the extension $F$ of a function in $f\in{HB}^{1-\theta/p}_{p,p}(\partial\Om)$ will be in $D^{1,p}(\Om)$. 

\begin{prop}\label{LipF}
If $f\in HB^{1-\theta/p}_{p,p}(\partial\Om)$, $1\leq p <\infty$, and $0<\theta<p$, then 
$F$ is locally Lipschitz continuous in $\Om$, and %$\Lip F\in L^p(\Om)$ with 
$\|\Lip F\|_{L^p}\lesssim\|f\|_{HB^{1-\theta/p}_{p,p}(\partial\Om)}$. In particular, $F\in D^{1,p}(\Om)$ with 
\[
\|F\|_{D^{1,p}}\lesssim\|f\|_{HB^{1-\theta/p}_{p,p}(\partial\Om)}.
\]
\end{prop}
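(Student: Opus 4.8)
The plan is to work locally: fix a Whitney ball $B_{i_0,j_0}$ and estimate $\Lip F$ on it, then sum the $L^p$-contributions using the bounded-overlap properties of the Whitney cover and of the sets $U_{i,j}^*$. First I would observe that on a fixed ball $B_{i_0,j_0}$, the sum in~\eqref{construction} is locally finite: by Lemma~\ref{claim:intersect} and property (ii) of the cover, only boundedly many $\pip_{i,j}$ are nonzero on $2B_{i_0,j_0}$, and all the relevant radii $r_{i,j}$ are comparable to $r_{i_0,j_0}$ (within a factor $2^{\pm 3}$). Since each $\pip_{i,j}$ is $C/r_{i,j}$-Lipschitz and the partition of unity sums to $1$, for $x,y\in B_{i_0,j_0}$ one writes $F(x)-F(y)=\sum_{i,j}(f_{U_{i,j}}-f_{U_{i_0,j_0}})(\pip_{i,j}(x)-\pip_{i,j}(y))$, exploiting that $\sum_{i,j}(\pip_{i,j}(x)-\pip_{i,j}(y))=0$. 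This gives the pointwise bound
\[
\Lip F(x)\lesssim \frac{1}{r_{i_0,j_0}}\sum_{(i,j):\,2B_{i,j}\ni x}|f_{U_{i,j}}-f_{U_{i_0,j_0}}|,
\]
with boundedly many terms, and one still has the freedom (useful for the telescoping) to compare each $f_{U_{i,j}}$ to $f_{U_{i_0,j_0}}$ through a short chain of neighbouring Whitney data. So the whole question reduces to controlling oscillations $|f_{U_{i,j}}-f_{U_{i_0,j_0}}|$ of averages of $f$ over the boundary sets $U_{i,j}$, where $B_{i,j}$ and $B_{i_0,j_0}$ are "neighbours" in the Whitney sense; the radii here are comparable, and by the triangle inequality $U_{i,j}, U_{i_0,j_0}\subset U_{i_0,j_0}^*$ (this is exactly what forces the constant $2^8$ in the definition of $U_{i,j}^*$).

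Next I would estimate such an oscillation. Since $U_{i,j}$ and $U_{i_0,j_0}$ are both contained in $U^*:=U_{i_0,j_0}^*$, a ball in $\partial\Om$ of radius $\approx r_{i_0,j_0}$, and $\nu$ is doubling with $\nu(U_{i,j})\approx\nu(U^*)$, the standard averaging estimate gives
\[
|f_{U_{i,j}}-f_{U_{i_0,j_0}}|^p\lesssim \jint_{U^*}\jint_{U^*}|f(\zeta)-f(\xi)|^p\,d\nu(\zeta)\,d\nu(\xi).
\]
Combining this with the pointwise bound on $\Lip F$ and raising to the $p$-th power (using that the sum over $(i,j)$ at each $x$ has boundedly many terms, so $\|\cdot\|_{\ell^1}\lesssim\|\cdot\|_{\ell^p}$ up to a constant), one gets, for $x\in B_{i_0,j_0}$,
\[
\Lip F(x)^p\lesssim \frac{1}{r_{i_0,j_0}^p}\sum_{(i,j)\ \text{nbr of}\ (i_0,j_0)}\jint_{U_{i,j}^*}\jint_{U_{i,j}^*}|f(\zeta)-f(\xi)|^p\,d\nu(\zeta)\,d\nu(\xi).
\]
Now integrate in $x$ over $B_{i_0,j_0}$: this produces a factor $\mu(B_{i_0,j_0})$, which by the $\theta$-codimensional condition~\eqref{eq:codim} and doubling is $\approx r_{i_0,j_0}^\theta\,\nu(U_{i_0,j_0}^*)\approx r_{i_0,j_0}^\theta\,\nu(U_{i,j}^*)$ for each neighbour $(i,j)$. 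Thus $\mu(B_{i_0,j_0})/r_{i_0,j_0}^p\approx r_{i_0,j_0}^{\theta-p}\nu(U_{i,j}^*) = r_{i_0,j_0}^{-(p-\theta)}\nu(U_{i,j}^*)$, i.e. up to constants $r_{i,j}^{-(p-\theta)}\nu(U_{i,j}^*)$ since radii are comparable, and each inner double average over $U_{i,j}^*$ absorbs one of the two $\nu(U_{i,j}^*)^{-1}$ factors, leaving
\[
\int_{B_{i_0,j_0}}\Lip F^p\,d\mu \lesssim \sum_{(i,j)\ \text{nbr of}\ (i_0,j_0)} \frac{1}{r_{i,j}^{p-\theta}\,\nu(U_{i,j}^*)}\int_{U_{i,j}^*}\int_{U_{i,j}^*}|f(\zeta)-f(\xi)|^p\,d\nu(\zeta)\,d\nu(\xi).
\]

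Finally I would sum over all Whitney balls $B_{i_0,j_0}$. Each term on the right is now indexed by a Whitney ball $(i,j)$, and a given $(i,j)$ appears only as a neighbour of boundedly many $(i_0,j_0)$ (Lemma~\ref{claim:intersect} plus Remark~\ref{rem:bnded-overlap}), so
\[
\int_{\Om}\Lip F^p\,d\mu \lesssim \sum_{i,j}\frac{1}{r_{i,j}^{p-\theta}\,\nu(U_{i,j}^*)}\int_{U_{i,j}^*}\int_{U_{i,j}^*}|f(\zeta)-f(\xi)|^p\,d\nu(\zeta)\,d\nu(\xi).
\]
For $\zeta,\xi\in U_{i,j}^*$ we have $d(\zeta,\xi)\lesssim r_{i,j}$ and $\nu(B(\zeta,d(\zeta,\xi))\cap\partial\Om)\lesssim\nu(U_{i,j}^*)$, so $r_{i,j}^{-(p-\theta)}\nu(U_{i,j}^*)^{-1}\lesssim d(\zeta,\xi)^{-(p-\theta)}\nu(B(\zeta,d(\zeta,\xi))\cap\partial\Om)^{-1}$ on a suitable annular decomposition; more cleanly, one uses Lemma~\eqref{rem:partition} (with $\alpha=1-\theta/p$, so $\alpha p=p-\theta$) after noting that the family $\{U_{i,j}^*\}$ has bounded overlap for each fixed scale $i$ (Lemma~\ref{bdd-overlap-U}) and that summing over $i$ recovers exactly the dyadic sum in~\eqref{rem:partition}. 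This bounds the right-hand side by $\|f\|_{HB^{1-\theta/p}_{p,p}(\partial\Om)}^p$, giving $\|\Lip F\|_{L^p}\lesssim\|f\|_{HB^{1-\theta/p}_{p,p}(\partial\Om)}$. Local Lipschitz continuity of $F$ is immediate from the local finiteness of the sum and the Lipschitz bound (iii') on each $\pip_{i,j}$, so $\Lip F$ is an upper gradient of $F$ and $F\in D^{1,p}(\Om)$ with the stated bound. The main obstacle is the bookkeeping in the second and third steps: correctly identifying the "neighbour" relation so that the chain of comparisons $f_{U_{i,j}}\to f_{U_{i_0,j_0}}$ stays inside a single enlarged set $U_{i,j}^*$ (this is what dictates the constant $2^8$), and then matching the resulting scale-indexed sum of double integrals over the $U_{i,j}^*$ to the Besov seminorm via~\eqref{rem:partition} and the bounded-overlap Lemma~\ref{bdd-overlap-U} — one must be careful that the overlap bound is uniform in the scale $i$, not just within a fixed scale.
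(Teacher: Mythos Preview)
Your proposal is correct and follows essentially the same route as the paper: subtract a constant using $\sum_{i,j}(\pip_{i,j}(x)-\pip_{i,j}(y))=0$, bound the oscillations $|f_{U_{i,j}}-f_{U_{l,m}}|$ by a double average over the enlarged set $U_{l,m}^*$ (using Lemma~\ref{claim:intersect} and doubling of $\nu$), convert $\mu(B_{l,m})/r_{l,m}^p$ into $r_{l,m}^{\theta-p}\nu(U_{l,m}^*)$ via~\eqref{eq:codim}, and then pass to the Besov seminorm via Lemma~\ref{bdd-overlap-U} and~\eqref{rem:partition}. The only cosmetic difference is that you carry a sum over neighbouring $U_{i,j}^*$ where the paper keeps a single term indexed by the fixed ball's $U_{l,m}^*$; since all these sets are mutually comparable this just costs an extra bounded-overlap step and is otherwise equivalent.
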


\begin{proof}
Fix a Whitney ball $B_{l,m}\in W_\Om$. For any two points $x,y\in B_{l,m}$, we have from property (i') that 
\[
\sum_{i,j}(\pip_{i,j}(y)-\pip_{i,j}(x))=0,
\]
and so 
\[
|F(y)-F(x)|=\left| \sum_{i,j}f_{U_{i,j}}(\pip_{i,j}(y)-\pip_{i,j}(x)) \right|=\left| \sum_{i,j}(f_{U_{i,j}}-f_{U_{l,m}})(\pip_{i,j}(y)-\pip_{i,j}(x)) \right|.
\]
Denoting by $I(l,m)$ the collection of all $(i,j)$ such that $2B_{i,j}\cap B_{l,m}\neq\emptyset$, we have from properties (ii) and (iii') that
\begin{equation}\label{difference}
|F(y)-F(x)|\leq \sum_{I(l,m)}|f_{U_{i,j}}-f_{U_{l,m}}|\frac{d(y,x)}{r_{i,j}}.
\end{equation}
		
From Lemma~\ref{claim:intersect} we know that if $(i,j)\in I(l,m)$, then $|i-l|\leq 3$. From this and (iii) it follows that
\begin{equation}\label{radii}
\min_{I(l,m)}r_{i,j}\approx r_{l,m}\approx 2^{l}. 
\end{equation}
Lemma~\ref{claim:intersect} also implies that for $(i,j)\in I(l,m)$, 
$U_{i,j}\subset B(\hat{x}_{l,m},2^8 r_{l,m})\cap\partial\Om=:U^*_{l,m}$ and, from the doubling property of $\nu$ we 
then also have that $\nu(U^*_{l,m})\approx \nu(U_{l,m})\approx \nu(U_{i,j})$. Thus,
\begin{equation}\label{osc}
|f_{U_{i,j}}-f_{U_{l,m}}|\leq \jint_{U_{i,j}}\jint_{U_{l,m}}\!|f(\zeta)-f(\xi)|\,d\nu(\xi)d\nu(\zeta)
\lesssim \jint_{U^*_{l,m}}\jint_{U^*_{l,m}}\!|f(\zeta)-f(\xi)|\,d\nu(\xi)d\nu(\zeta). 
\end{equation}
Hence, for $x\in B_{l,m}$, property (ii) along with equations \eqref{difference}, \eqref{radii}, and \eqref{osc} imply that
\[
\Lip F(x)\lesssim \frac{1}{2^l}\jint_{U^*_{l,m}}\jint_{U^*_{l,m}}\!|f(\zeta)-f(\xi)|\,d\nu(\xi)d\nu(\zeta).
\]
Applying the doubling property of $\mu$ along with the codimensionality condition on $\nu$, we have that 
$\mu(B_{l,m})\approx\mu(U_{l,m})\lesssim r_{l,m}^\theta\nu(U_{l,m})\approx 2^{l\theta}\nu(U^*_{l,m})$, and so
\begin{align*}
\int_{\Om}\!(\Lip F)^p\,d\mu\leq\sum_{l,m}\int_{B_{l,m}}\!(\Lip F)^p\,d\mu
&\lesssim \sum_{l,m}\frac{\mu(B_{l,m})}{(2^l)^p}\left(\jint_{U^*_{l,m}}\jint_{U^*_{l,m}}\!|f(\zeta)-f(\xi)|\,d\nu(\xi)d\nu(\zeta)\right)^p\\
&\lesssim \sum_{l,m}\frac{\nu(U^*_{l,m})}{(2^l)^{p-\theta}}\jint_{U^*_{l,m}}\jint_{U^*_{l,m}}\!|f(\zeta)-f(\xi)|^p\,d\nu(\xi)d\nu(\zeta)\\
&= \sum_{l,m}\frac{1}{(2^l)^{p-\theta}}\int_{U^*_{l,m}}\jint_{U^*_{l,m}}\!|f(\zeta)-f(\xi)|^p\,d\nu(\xi)d\nu(\zeta).
\end{align*}
For $\zeta\in U^*_{l,m}$, we have 
$U^*_{l,m}\subset B(\zeta,2^9r_{l,m})\cap\partial\Om\subset B(\zeta,C\,2^{l})\cap\partial\Om$ with 
$\nu(U^*_{l,m})\approx\nu(B(\zeta,C\,2^{l})\cap\partial\Om)$ from the doubling property of $\nu$. This implies that
\begin{align*}
\int_{\Om}\!(\Lip F)^p\,d\mu&\lesssim \sum_{l,m}\frac{1}{(2^l)^{p-\theta}}\int_{U^*_{l,m}}\jint_{B(\zeta,C\,2^{l})\cap\partial\Om}\!|f(\zeta)-f(\xi)|^p\,d\nu(\xi)d\nu(\zeta)\\
&\lesssim  \sum_{l}\frac{1}{(2^l)^{p-\theta}}\int_{\partial\Om}\jint_{B(\zeta,C\,2^{l})\cap\partial\Om}\!|f(\zeta)-f(\xi)|^p\,d\nu(\xi)d\nu(\zeta)
\end{align*}
using the fact that $\{U^*_{l,m}\}_m$ has bounded overlap for each fixed $l$, see Lemma~\ref{bdd-overlap-U}. 
Finally, by~\eqref{rem:partition},
\[
\int_{\Om}\!(\Lip F)^p\,d\mu\lesssim \int_{0}^{\infty}\frac{1}{r^{p-\theta}}\int_{\partial\Om}\jint_{B(\zeta,r)\cap\partial\Om}\!|f(\zeta)-f(\xi)|^p\,d\nu(\xi)d\nu(\zeta)\frac{dr}{r}\approx\|f\|^p_{{HB}^{1-\theta/p}_{p,p}(\partial\Om)}.
\]
\end{proof}

Next, we show that this extension is really an extension. 

\begin{prop}\label{thm:L^p-trace}
If $f\in L^p_{\text{loc}}(\partial\Om)$, $1\leq p <\infty$, then
\[
\lim_{r\rightarrow 0^+}\jint_{B(\zeta,r)\cap\Om}\!|F-f(\zeta)|^p\,d\mu=0
\]
for $\nu$-a.e. $\zeta\in\partial\Om$. That is, the trace of $F$ exists and equals $f$ $\nu$-a.e.. 
\end{prop}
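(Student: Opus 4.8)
The plan is to show that for $\nu$-a.e.\ $\zeta \in \partial\Om$, the averages of $F$ over $B(\zeta,r)\cap\Om$ converge to $f(\zeta)$. The natural set of good points is the set of $\nu$-Lebesgue points of $f$: since $\nu$ is doubling and $f \in L^p_{\loc}(\partial\Om) \subset L^1_{\loc}(\partial\Om)$, $\nu$-a.e.\ $\zeta$ is a Lebesgue point, meaning $\jint_{B(\zeta,s)\cap\partial\Om} |f - f(\zeta)|^p\,d\nu \to 0$ as $s \to 0^+$. Fix such a $\zeta$ and a small radius $r > 0$. First I would observe that by property (i') of the partition of unity, for $x \in B(\zeta,r)\cap\Om$ we have $F(x) - f(\zeta) = \sum_{i,j}(f_{U_{i,j}} - f(\zeta))\,\pip_{i,j}(x)$, and only those $(i,j)$ with $x \in 2B_{i,j}$ contribute. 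For such $(i,j)$, since $x$ is within distance $r$ of $\partial\Om$, the Whitney radius satisfies $r_{i,j} = \tfrac18 d_\Om(x_{i,j}) \lesssim d(x,x_{i,j}) + d_\Om(x) \lesssim r$ (using $x \in 2B_{i,j}$ and $d_\Om(x) \le r$), so all relevant Whitney balls have radius $\lesssim r$ and are located within distance $\lesssim r$ of $\zeta$; consequently $U_{i,j} \subset B(\zeta, Cr)\cap\partial\Om$ for a structural constant $C$.

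Next I would bound the oscillation. Using $0 \le \pip_{i,j} \le 1$ and the bounded overlap property (ii), we get $|F(x) - f(\zeta)| \le \sum_{(i,j):\, x\in 2B_{i,j}} |f_{U_{i,j}} - f(\zeta)| \lesssim \max_{(i,j):\, x\in 2B_{i,j}} |f_{U_{i,j}} - f(\zeta)|$, since the number of nonzero terms is bounded by $C$. For each such $(i,j)$, Jensen's inequality gives $|f_{U_{i,j}} - f(\zeta)|^p \le \jint_{U_{i,j}} |f - f(\zeta)|^p\,d\nu$, and since $U_{i,j} \subset B(\zeta,Cr)\cap\partial\Om$ with $\nu(U_{i,j}) \approx \nu(B(\zeta,Cr)\cap\partial\Om)$ by the doubling property of $\nu$ together with the codimension condition comparing $\nu(U_{i,j})$ to $r_{i,j}^{-\theta}\mu(2B_{i,j})$ and then to $\nu(B(\zeta,Cr)\cap\partial\Om)$, we obtain the uniform-in-$x$ bound
\[
|F(x) - f(\zeta)|^p \lesssim \jint_{B(\zeta,Cr)\cap\partial\Om} |f - f(\zeta)|^p\,d\nu.
\]
Averaging over $x \in B(\zeta,r)\cap\Om$ preserves this bound, so
\[
\jint_{B(\zeta,r)\cap\Om} |F - f(\zeta)|^p\,d\mu \lesssim \jint_{B(\zeta,Cr)\cap\partial\Om} |f - f(\zeta)|^p\,d\nu \longrightarrow 0
\]
as $r \to 0^+$, since $\zeta$ is a $\nu$-Lebesgue point of $f$ (here one needs $\mu(B(\zeta,r)\cap\Om) > 0$ for small $r$, which holds because the codimension condition forces $\mu(B(\zeta,r)\cap\Om) \approx r^\theta \nu(B(\zeta,r)\cap\partial\Om) > 0$ as $\nu$ is non-atomic and $\zeta \in \supt\nu$; one should restrict attention to $\zeta$ in the support of $\nu$, which is of full $\nu$-measure).

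The main obstacle, and the step requiring the most care, is the geometric bookkeeping showing that every Whitney ball $2B_{i,j}$ meeting $B(\zeta,r)\cap\Om$ has $r_{i,j} \lesssim r$ and the corresponding boundary patch $U_{i,j}$ sits inside $B(\zeta,Cr)\cap\partial\Om$ with comparable $\nu$-measure — this is where one must track the role of the closest-point projection $\hat x_{i,j}$ and invoke the codimension inequality~\eqref{eq:codim} to relate $\nu(U_{i,j})$, $\mu(2B_{i,j})$, and $\nu(B(\zeta,Cr)\cap\partial\Om)$ with constants independent of $\zeta$ and $r$. Everything else is a routine combination of Jensen's inequality, the doubling property, and the bounded overlap of the partition of unity.
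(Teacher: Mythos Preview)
There is a genuine gap in your argument, and it lies precisely in the step you flag as ``the main obstacle'': the comparability $\nu(U_{i,j}) \approx \nu(B(\zeta,Cr)\cap\partial\Om)$ is simply false. For a point $x\in B(\zeta,r)\cap\Om$ very close to $\partial\Om$, the relevant Whitney balls $2B_{i,j}$ have radii $r_{i,j}\approx d_\Om(x)$, which can be arbitrarily small compared to $r$. The codimension relation gives $\nu(U_{i,j})\approx r_{i,j}^{-\theta}\mu(2B_{i,j})$ and $\nu(B(\zeta,Cr)\cap\partial\Om)\approx r^{-\theta}\mu(B(\zeta,Cr)\cap\Om)$, but these two quantities are \emph{not} comparable: if, say, $\mu$ is Ahlfors $Q$-regular with $Q>\theta$, the ratio is $(r_{i,j}/r)^{Q-\theta}$, which tends to $0$ as $r_{i,j}/r\to 0$. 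Consequently the uniform pointwise bound
\[
|F(x)-f(\zeta)|^p\lesssim \jint_{B(\zeta,Cr)\cap\partial\Om}|f-f(\zeta)|^p\,d\nu
\]
does not follow, and in fact cannot hold in general: $U_{i,j}$ is a small ball centered at $\hat x_{i,j}$, possibly far (on its own scale) from $\zeta$, and there is no reason for $\jint_{U_{i,j}}|f-f(\zeta)|^p\,d\nu$ to be controlled by the Lebesgue-point behavior of $f$ at $\zeta$.

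The paper circumvents this by abandoning the pointwise estimate and instead integrating first. Lemma~\ref{prop:Lploc} gives $\int_{B_{l,m}}|F|^p\,d\mu\lesssim 2^{l\theta}\int_{U^*_{l,m}}|f|^p\,d\nu$; summing over all Whitney balls meeting $B(\zeta,r)$ and using the bounded overlap of $\{U^*_{i,j}\}_j$ for each fixed $i$ (Lemma~\ref{bdd-overlap-U}) yields a sum $\sum_{i\le i_0}2^{i\theta}$ with $2^{i_0}\approx r$. It is precisely the convergence of this geometric series (requiring $\theta>0$) that compensates for the many small scales, producing Lemma~\ref{prop:Lploc-layer}. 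Applying that lemma to $f_\zeta:=f-f(\zeta)$ and dividing by $\mu(B(\zeta,r)\cap\Om)$ then gives the desired averaged inequality. Your argument is missing exactly this scale-summation mechanism.
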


We will prove the above proposition using Lebesgue's differentiation theorem for the function $f$ with respect to
$\nu$, together with the following lemmas. 
	
\begin{lem}\label{prop:Lploc}
If $f\in L^p_{\text{loc}}(\partial\Om)$, then
\begin{equation}\label{eq:Lploc}
\int_{B_{l,m}}\!|F|^p\,d\mu\lesssim 2^{l\theta}\int_{U^*_{l,m}}\!|f|^p\,d\nu.
\end{equation}
\end{lem}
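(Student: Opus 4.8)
The plan is to bound $|F|$ pointwise on $B_{l,m}$ by a quantity controlled by averages of $|f|$ over $U^*_{l,m}$, and then integrate. First I would observe that on $B_{l,m}$ the partition of unity collapses: by (i') we have $F(x)=\sum_{i,j}f_{U_{i,j}}\pip_{i,j}(x)$, and only those $(i,j)\in I(l,m)$ (those with $2B_{i,j}\cap B_{l,m}\neq\emptyset$) contribute, since $0\le\pip_{i,j}\le\chi_{2B_{i,j}}$. By Lemma~\ref{claim:intersect}, every such index has $|i-l|\le 3$, hence (using (iii) and the construction) $r_{i,j}\approx 2^l\approx r_{l,m}$, and $U_{i,j}\subset U^*_{l,m}$ with $\nu(U_{i,j})\approx\nu(U^*_{l,m})$ by the doubling property of $\nu$. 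Therefore $|f_{U_{i,j}}|=|\jint_{U_{i,j}}f\,d\nu|\lesssim\jint_{U^*_{l,m}}|f|\,d\nu$ for each contributing index, and since $\sum_{i,j}\pip_{i,j}=\chi_\Om$, for $x\in B_{l,m}$ we get
\[
|F(x)|\le\sum_{I(l,m)}|f_{U_{i,j}}|\,\pip_{i,j}(x)\lesssim\jint_{U^*_{l,m}}\!|f|\,d\nu.
\]

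Next I would raise this to the $p$-th power and invoke Jensen's inequality on the average, obtaining $|F(x)|^p\lesssim\jint_{U^*_{l,m}}|f|^p\,d\nu$ for every $x\in B_{l,m}$; note $f\in L^p_{\loc}(\partial\Om)$ guarantees this average is finite. Integrating over $B_{l,m}$ then gives
\[
\int_{B_{l,m}}\!|F|^p\,d\mu\lesssim\mu(B_{l,m})\,\jint_{U^*_{l,m}}\!|f|^p\,d\nu
=\frac{\mu(B_{l,m})}{\nu(U^*_{l,m})}\int_{U^*_{l,m}}\!|f|^p\,d\nu.
\]
Finally I would apply the codimensionality condition~\eqref{eq:codim} together with doubling of $\mu$ and $\nu$, exactly as in the proof of Proposition~\ref{LipF}: $\mu(B_{l,m})\approx\mu(U_{l,m})\lesssim r_{l,m}^\theta\,\nu(U_{l,m})\approx 2^{l\theta}\,\nu(U^*_{l,m})$, where $\mu(U_{l,m})$ is shorthand for $\mu(B(\hat x_{l,m},r_{l,m})\cap\Om)$ (and one uses that $B_{l,m}$ and $B(\hat x_{l,m},r_{l,m})\cap\Om$ are comparable balls in $\Om$, since $d(x_{l,m},\hat x_{l,m})=d_\Om(x_{l,m})=8r_{l,m}$). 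This yields $\mu(B_{l,m})/\nu(U^*_{l,m})\lesssim 2^{l\theta}$ and hence~\eqref{eq:Lploc}.

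The computation is essentially routine; the only point requiring care is the comparison $\mu(B_{l,m})\approx\mu(B(\hat x_{l,m},r_{l,m})\cap\Om)$, which is what lets the codimensionality inequality~\eqref{eq:codim} (stated for balls centered at boundary points) be applied. This is handled by noting that $B_{l,m}=B(x_{l,m},r_{l,m})$ and $B(\hat x_{l,m},r_{l,m})$ both lie inside, and contain a fixed fraction of, a common ball of radius comparable to $r_{l,m}$ centered at either point, so the doubling property of $\mu$ gives the comparison; this is exactly the argument already used (without further comment) in the proof of Proposition~\ref{LipF}, so no new obstacle arises.
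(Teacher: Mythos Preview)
Your proof is correct and follows essentially the same approach as the paper: restrict the sum defining $F$ to indices $(i,j)\in I(l,m)$, use $U_{i,j}\subset U^*_{l,m}$ with comparable $\nu$-measure to bound $|F|$ pointwise by $\jint_{U^*_{l,m}}|f|\,d\nu$, apply Jensen/H\"older to pass to the $L^p$-average, integrate, and finish with $\mu(B_{l,m})\lesssim 2^{l\theta}\nu(U^*_{l,m})$ from doubling and codimensionality. The paper's proof is slightly terser (it does not spell out the $\mu(B_{l,m})\approx\mu(B(\hat x_{l,m},r_{l,m})\cap\Om)$ step you carefully justify), but the argument is the same.
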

	
\begin{proof}
Fix a Whitney ball $B_{l,m}\in W_\Om$. Lemma~\ref{claim:intersect} implies that 
$U_{i,j}\subset U^*_{l,m}$ for all $(i,j)\in I(l,m)$  
and that $\nu(U^*_{l,m})\approx\nu(U_{i,j})$. Thus, by property (i') and by H\"older's inequality,
\begin{align*}
\int_{B_{l,m}}\!|F|^p\,d\mu=\int_{B_{l,m}}\!\left|\sum_{I(l,m)}\left(\jint_{U_{i,j}}\!f\,d\nu\right)\pip_{i,j}\right|^pd\mu
&\leq \int_{B_{l,m}}\!\left(\left(\jint_{U^*_{l,m}}\!|f|\,d\nu\right)\sum_{i,j}\pip_{i,j}\right)^pd\mu\\
&\leq\mu(B_{l,m})\jint_{U^*_{l,m}}\!|f|^p\,d\nu.
\end{align*}
As $\mu(B_{l,m})\lesssim 2^{l\theta}\nu(U^*_{l,m})$ (this follows from the doubling property of $\mu$ and the 
$\theta$-codimensionality of $\nu$), we have the desired inequality.
\end{proof}
	
\begin{lem}\label{prop:Lploc-layer}
If $f\in L^p_{\text{loc}}(\partial\Om)$ and $r>0$, then 
\[		
\int_{B(\zeta,r)\cap\Om}\!|F|^p\,d\mu\lesssim r^\theta \int_{B(\zeta,2^8r)\cap\partial\Om}\!|f|^p\,d\nu. 
\]		
\end{lem}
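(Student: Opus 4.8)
\textbf{Proof plan for Lemma~\ref{prop:Lploc-layer}.}
The plan is to cover the ``inner collar'' $B(\zeta,r)\cap\Om$ by Whitney balls and then apply Lemma~\ref{prop:Lploc} to each, summing the resulting estimates. First I would fix $\zeta\in\partial\Om$ and $r>0$, and let $I_\zeta(r)$ be the collection of all pairs $(l,m)$ for which $B_{l,m}\cap B(\zeta,r)\neq\emptyset$. Since $\Om=\bigcup_{l,m}B_{l,m}$ by property~(i) of the Whitney decomposition, we have $B(\zeta,r)\cap\Om\subset\bigcup_{I_\zeta(r)}B_{l,m}$, so it suffices to bound $\sum_{I_\zeta(r)}\int_{B_{l,m}}|F|^p\,d\mu$.

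The key geometric observations are the following two. If $(l,m)\in I_\zeta(r)$ and $x\in B_{l,m}$ lies in $B(\zeta,r)$, then since $x_{l,m}\in\Om$ has $d_\Om(x_{l,m})=8r_{l,m}$ and $d(x_{l,m},\zeta)\le d(x_{l,m},x)+d(x,\zeta)< 2r_{l,m}+r$ while also $d(x_{l,m},\zeta)\ge d_\Om(x_{l,m})=8r_{l,m}$, we get $8r_{l,m}<2r_{l,m}+r$, hence $r_{l,m}<r/6$; in particular $2^l\le 2r_{l,m}\lesssim r$. Next, the enlarged set $U^*_{l,m}=B(\hat x_{l,m},2^8r_{l,m})$ satisfies $U^*_{l,m}\subset B(\zeta,2^8r)\cap\partial\Om$: indeed for $\xi\in U^*_{l,m}$, $d(\xi,\zeta)\le d(\xi,\hat x_{l,m})+d(\hat x_{l,m},x_{l,m})+d(x_{l,m},x)+d(x,\zeta)< 2^8r_{l,m}+8r_{l,m}+2r_{l,m}+r\le 2^9r_{l,m}+r\lesssim r$, and one checks the constants are bounded by $2^8$ (this is exactly the kind of constant-tracking that fixes the choice $2^8$ in the definition of $U^*_{l,m}$, matching Lemma~\ref{prop:Lploc-layer}'s statement — if not literally $2^8$, one absorbs the discrepancy into the implied constant, which is acceptable). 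Applying Lemma~\ref{prop:Lploc} to each $(l,m)\in I_\zeta(r)$ and using $2^{l\theta}\lesssim r^\theta$, we obtain
\[
\int_{B(\zeta,r)\cap\Om}\!|F|^p\,d\mu\le\sum_{I_\zeta(r)}\int_{B_{l,m}}\!|F|^p\,d\mu
\lesssim r^\theta\sum_{I_\zeta(r)}\int_{U^*_{l,m}}\!|f|^p\,d\nu.
\]

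The main obstacle is controlling the sum $\sum_{I_\zeta(r)}\int_{U^*_{l,m}}|f|^p\,d\nu$ by a single integral $\int_{B(\zeta,2^8r)\cap\partial\Om}|f|^p\,d\nu$; naively the $U^*_{l,m}$ could overlap with unbounded multiplicity across different generations $l$. The resolution is that only finitely many generations contribute: by the estimate above, $(l,m)\in I_\zeta(r)$ forces $r_{l,m}<r/6$, and a matching lower bound $r_{l,m}\gtrsim r$ need not hold, but for each fixed $l$ the sets $\{U^*_{l,m}\}_m$ have bounded overlap by Lemma~\ref{bdd-overlap-U}, so $\sum_m\int_{U^*_{l,m}}|f|^p\,d\nu\lesssim\int_{B(\zeta,2^8r)\cap\partial\Om}|f|^p\,d\nu$ for each such $l$. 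It remains to sum over $l$; but here we must be slightly careful, since there is no lower bound on $l$. The fix is to not sum crudely: instead, for each generation $l$ with $2^l\lesssim r$, use the sharper bound $\mu(B_{l,m})\lesssim 2^{l\theta}\nu(U^*_{l,m})$ from Lemma~\ref{prop:Lploc} together with $2^{l\theta}\lesssim(2^l)^\theta$ and the bounded overlap in $m$, yielding a contribution $\lesssim(2^l)^\theta\int_{B(\zeta,2^8r)\cap\partial\Om}|f|^p\,d\nu$; since $\theta>0$, the geometric series $\sum_{l\,:\,2^l\lesssim r}(2^l)^\theta\lesssim r^\theta$ converges and is comparable to $r^\theta$. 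Combining, $\int_{B(\zeta,r)\cap\Om}|F|^p\,d\mu\lesssim r^\theta\int_{B(\zeta,2^8r)\cap\partial\Om}|f|^p\,d\nu$, which is the claim.
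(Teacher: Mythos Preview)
Your proposal is correct and follows essentially the same route as the paper: cover $B(\zeta,r)\cap\Om$ by the intersecting Whitney balls, apply Lemma~\ref{prop:Lploc} to each, observe that $r_{l,m}\lesssim r$ and $U^*_{l,m}\subset B(\zeta,2^8r)$, use Lemma~\ref{bdd-overlap-U} to sum over $m$ at each fixed scale $l$, and then sum the geometric series $\sum_{l\,:\,2^l\lesssim r}2^{l\theta}\approx r^\theta$. The one cosmetic issue is that your first displayed estimate, where you replace $2^{l\theta}$ by $r^\theta$ before summing, is a dead end that you then abandon; the paper keeps the factor $2^{i\theta}$ throughout and sums the geometric series directly, which is exactly the ``fix'' you describe in your final paragraph---so you may as well start there.
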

	
\begin{proof}
Fix $\zeta\in\partial\Om$ and $r>0$. 
Suppose that some Whitney ball $B_{i,j}$ intersects  $B(\zeta,r)$. Then by the construction of the Whitney cover, we have 
$8r_{i,j}=d_\Om(x_{i,j})\le d(\zeta,x_{i,j})$, and so it follows that $r_{i,j}\le r/7$. Let $I(r)$ be the collection of all integers $i$ for which
there is some positive integer $j$ with $B_{i,j}\cap B(\zeta,r)$ non-empty. For each $i\in I(r)$,
denote  by $\mathcal{J}(i)$ the collection of $j\in\N$ such that $B_{i,j}\cap B(\zeta,r)\neq\emptyset$. 
Then~\eqref{eq:Lploc} implies that
\[
\int_{B(\zeta,r)\cap\Om}\!|F|^p\,d\mu
\leq \sum_{i\in I(r)}\sum_{j\in\mathcal{J}(i)}\int_{B_{i,j}}\!|F|^p\,d\mu
\lesssim \sum_{i\in I(r)}\sum_{j\in\mathcal{J}(i)}2^{i\theta}\int_{U^*_{i,j}}\!|f|^p\,d\nu.
\]
For $i\in I(r)$ and $j\in\mathcal{J}(i)$, we have that $B_{i,j}\cap B(\zeta,r)\neq\emptyset$ and 
so $d(\zeta,x_{i,j})\leq r+r_{i,j}$. Recall that for each $i\in I(r)$ and $j\in\mathcal{J}(i)$ we have 
\begin{equation}\label{eq:rij-r}
2^{i-1}\le r_{i,j}\le r/7.
\end{equation}  
The triangle inequality then implies that 
$U^*_{i,j}\subset B(\zeta, r+2^9r_{i,j})\subset B(\zeta, 2^8r)$. 
By the bounded overlap property of $\{U^*_{i,j}\}_{j\in\mathcal{J}(i)}$ for each fixed $i\in I(r)$, see 
Lemma~\ref{bdd-overlap-U}, we have that
\begin{align*}
\sum_{i\in I(r)}2^{i\theta}\sum_{j\in\mathcal{J}(i)}\int_{U^*_{i,j}}\!|f|^p\,d\nu
&\lesssim \sum_{i\in I(r)}2^{i\theta}\int_{B(\zeta, 2^8r)\cap\partial\Om}|f|^p\, d\nu\\
&=2^{i_0\theta}\left(\sum_{i=0}^\infty 2^{-i\theta}\right)\, \int_{B(\zeta, 2^8r)\cap\partial\Om}|f|^p\, d\nu\\
&\approx 2^{i_0\theta}\, \int_{B(\zeta, 2^8r)\cap\partial\Om}|f|^p\, d\nu,
\end{align*}
where $i_0=\max I(r)$. Since for $i\in I(r)$ we have $2^{i-1}\le r/7$, 
see~\eqref{eq:rij-r} above,
it follows that $2^{i_0\theta}\lesssim r^\theta$; the
claim of the lemma now follows.
\end{proof}

\begin{proof}[Proof of Proposition~\ref{thm:L^p-trace}]
Fix $\zeta\in\partial\Om$ and write $f_{\zeta}(\xi)=f(\xi)-f(\zeta)$ for $\xi\in\partial\Om$. 
We have that $f_{\zeta}\in L^p_{\text{loc}}(\partial\Om)$ and its extension $F_{\zeta}$ 
satisfies $F_{\zeta}(x)=F(x)-f(\zeta)$ for every $x\in\Om$. An application of 
Lemma~\ref{prop:Lploc-layer} to $f_{\zeta}$ yields
\begin{align*}
\int_{B(\zeta,r)\cap\Om}\!|F(x)-f(\zeta)|^p\,d\mu(x)
&=\int_{B(\zeta,r)\cap\Om}\!|F_{\zeta}(x)|^p\,d\mu(x)\\
&\lesssim r^\theta \int_{B(\zeta,2^8r)\cap\partial\Om}\!|f_\zeta(\xi)|^p\,d\nu(\xi)
=r^\theta \int_{B(\zeta,2^8r)\cap\partial\Om}\!|f(\xi)-f(\zeta)|^p\,d\nu(\xi).
\end{align*}
From the doubling and codimensionality of $\nu$ it follows that 
\[
\jint_{B(\zeta,r)\cap\Om}\!|F(x)-f(\zeta)|^p\,d\mu(x)\lesssim \jint_{B(\zeta,r)\cap\partial\Om}\!|f(\xi)-f(\zeta)|^p\,d\nu(\xi).
\]
By the local $p$-integrability of $f$, $\nu$-almost every $\zeta\in\partial\Om$ is a Lebesgue point of $f$, 
and so the right-hand side of the above inequality tends to 0 as $r\rightarrow{0^+}$ for $\nu$-almost every $\zeta\in\partial\Om$. 
	\end{proof}
	
Now we are ready to prove the second part of Theorem~\ref{thm:main}.

\begin{thm}\label{thm:ext}
Let $1\leq p <\infty$ and $0<\theta<p$. There is a bounded linear extension operator 
$E:HB^{1-\theta/p}_{p,p}(\partial\Om)\to D^{1,p}(\Om)$ such that
$T\circ E$ is the identity map on $HB^{1-\theta/p}_{p,p}(\partial\Om)$, where $T$ is the trace operator constructed in
the proof of Theorem~\ref{thm:trace}.
\end{thm}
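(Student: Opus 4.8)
The plan is to define the extension operator by $Ef:=F$, where $F$ is the function associated to $f\in L^1_{\loc}(\partial\Om)$ through the Whitney construction~\eqref{construction}, and then to verify the three assertions of the theorem — that $E$ is linear, that it maps $HB^{1-\theta/p}_{p,p}(\partial\Om)$ boundedly into $D^{1,p}(\Om)$, and that $T\circ E$ is the identity — by assembling Proposition~\ref{LipF} and Proposition~\ref{thm:L^p-trace}.

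For linearity, I would observe that for each fixed pair $(i,j)$ the average $f\mapsto f_{U_{i,j}}=\jint_{U_{i,j}}f\,d\nu$ is a linear functional on $L^1_{\loc}(\partial\Om)$ and that the partition-of-unity functions $\pip_{i,j}$ appearing in~\eqref{construction} are chosen independently of $f$; since near any point of $\Om$ only finitely many of the functions $\pip_{i,j}$ are nonzero (by the support condition (i'') and the bounded overlap of the Whitney cover), one may distribute the sum over $f$, so that $E(af+bg)=aEf+bEg$. The same remark shows that $Ef$ depends only on the $\nu$-equivalence class of $f$, so $E$ is well defined. Boundedness is then immediate from Proposition~\ref{LipF}: membership in $HB^{1-\theta/p}_{p,p}(\partial\Om)$ entails $f\in L^p_{\loc}(\partial\Om)\subset L^1_{\loc}(\partial\Om)$, so the proposition applies and gives $Ef=F\in D^{1,p}(\Om)$ together with $\|Ef\|_{D^{1,p}}\lesssim\|f\|_{HB^{1-\theta/p}_{p,p}(\partial\Om)}$, the implied constant being independent of $f$. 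Hence $E\colon HB^{1-\theta/p}_{p,p}(\partial\Om)\to D^{1,p}(\Om)$ is bounded and linear.

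It remains to check that $T(Ef)=f$ for every $f\in HB^{1-\theta/p}_{p,p}(\partial\Om)$. Write $F=Ef\in D^{1,p}(\Om)=D^{1,p}(\overline\Om)$. By the discussion in Subsection~\ref{Sec3.2}, the trace value $TF(\zeta)$ is defined at $\nu$-a.e.\ $\zeta\in\partial\Om$ as the unique real number satisfying $\lim_{r\to0^+}\jint_{B(\zeta,r)\cap\Om}|F-TF(\zeta)|\,d\mu=0$. On the other hand, since $f\in L^p_{\loc}(\partial\Om)$, Proposition~\ref{thm:L^p-trace} gives $\lim_{r\to0^+}\jint_{B(\zeta,r)\cap\Om}|F-f(\zeta)|^p\,d\mu=0$ for $\nu$-a.e.\ $\zeta$, and by H\"older's inequality the same limit holds with $|F-f(\zeta)|$ in place of $|F-f(\zeta)|^p$. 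Since such a limit value is unique — if $a$ and $b$ both work then $|a-b|\le\jint_{B(\zeta,r)\cap\Om}|F-a|\,d\mu+\jint_{B(\zeta,r)\cap\Om}|F-b|\,d\mu\to0$ — we conclude $TF(\zeta)=f(\zeta)$ for $\nu$-a.e.\ $\zeta$. Because the Besov seminorm on $\partial\Om$ does not detect $\nu$-null sets, elements of $HB^{1-\theta/p}_{p,p}(\partial\Om)$ are determined $\nu$-a.e., and so this pointwise identity says exactly that $T\circ E$ is the identity operator on $HB^{1-\theta/p}_{p,p}(\partial\Om)$, with $T$ the trace operator of Theorem~\ref{thm:trace}.

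I do not expect a genuine obstacle, since the analytic content has already been supplied by Propositions~\ref{LipF} and~\ref{thm:L^p-trace}; the only points requiring a moment's care are the passage from $L^p$-mean convergence to the $L^1$-mean convergence used to define $TF(\zeta)$, and the bookkeeping that upgrades the $\nu$-a.e.\ identity $TF=f$ to an identity of operators on $HB^{1-\theta/p}_{p,p}(\partial\Om)$.
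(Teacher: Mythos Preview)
Your proposal is correct and follows essentially the same approach as the paper: define $Ef=F$ via the Whitney construction~\eqref{construction}, invoke Proposition~\ref{LipF} for boundedness into $D^{1,p}(\Om)$, and invoke Proposition~\ref{thm:L^p-trace} to conclude $T\circ E=\mathrm{id}$. Your additional remarks on linearity, the passage from $L^p$- to $L^1$-mean convergence via H\"older, and the uniqueness of the trace value are all correct elaborations of points the paper leaves implicit.
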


\begin{proof}
For $f\in HB^{1-\theta/p}_{p,p}(\partial\Om)$, take $Ef=F$, where $F$ is as in \eqref{construction}. Then $E$ is linear 
by construction and is bounded from $HB^{1-\theta/p}_{p,p}(\partial\Om)$ to $D^{1,p}(\Om)$ by Proposition~\ref{LipF}. 
Consider the trace operator $T$ from Theorem~\ref{thm:trace}. Then $T\circ E f = TF = f$ $\nu$-almost everywhere 
by Proposition~\ref{thm:L^p-trace}. 
\end{proof}

This completes the proof of the main theorem of this note, Theorem~\ref{thm:main}.

\noindent {\bf Address:} \\

\noindent Department of Mathematical Sciences, P.O.~Box 210025, University of Cincinnati, Cincinnati, OH~45221-0025, U.S.A.\\
\noindent E-mail: R.G.: {\tt ryan.gibara@gmail.com}, \ \ \ N.S.: {\tt shanmun@uc.edu}\\

\end{document}